\crefname{hypothesis}{Hypothesis}{Hypotheses}
\title{GMRES, pseudospectra, and Crouzeix's conjecture for shifted and scaled Ginibre matrices%
\thanks{
    \textbf{Funding:} This material is based on work supported by the National Science Foundation under Grant Nos. DGE-1762114 (TC), DMS-1945652 (TT).
Any opinions, findings, and conclusions or recommendations expressed in this material are those of the authors and do not necessarily reflect the views of the National Science Foundation.}}
\author{Tyler Chen\thanks{New York University, \href{mailto:tyler.chen@nyu.edu}{\texttt{tyler.chen@nyu.edu}}}
\and Anne Greenbaum\thanks{University of Washington, \href{mailto:greenbau_uw.edu}{\texttt{greenbau@uw.edu}}}
\and Thomas Trogdon\thanks{University of Washington, \href{mailto:trogdon@uw.edu}{\texttt{trogdon@uw.edu}}}
}
\newcommand{\@nderle}[2]{%
  \vtop{
    \lineskiplimit\maxdimen
    \lineskip+.5\p@
    \ialign{$\m@th#1\hfil##\hfil$\crcr<\crcr#2\crcr}%
  }%
}
\newcommand{\lesim}{\mathrel{\mathpalette\@nderle\sim\relax}}
\newcommand{\@ndereq}[2]{%
  \vtop{
    \lineskiplimit\maxdimen
    \lineskip+.5\p@
    \ialign{$\m@th#1\hfil##\hfil$\crcr=\crcr#2\crcr}%
  }%
}
\crefname{section}{Section}{Sections}
\crefname{ineq}{}{}
\newcommand{\note}[1]{{\color{red}#1}}
\newcommand{\bOne}{\ensuremath{\mathds{1}}}
\newcommand{\PP}{\mathbb{P}}
\renewcommand{\Re}{\operatorname{Re}}
\renewcommand{\Im}{\operatorname{Im}}
\renewcommand{\d}[1]{\ensuremath{\mathrm{d}#1}}
\renewcommand{\vec}{\mathbf}
\newcommand{\lmin}{\lambda_\textup{min}}
\newcommand{\T}{\textup{\textsf{T}}}
\newcommand{\cT}{*}
\newcommand{\asto}{\xrightarrow[N\to\infty]{\textup{a.s.}}}
\newcommand{\prto}{\xrightarrow[N\to\infty]{\textup{prob.}}}
\newcommand{\ii}{\mathrm{i}}
\begin{document}

\maketitle

\begin{abstract}
    We study the GMRES algorithm applied to linear systems of equations involving a scaled and shifted \( N\times N \) matrix whose entries are independent complex Gaussians. 
    When the right hand side of this linear system is independent of this random matrix, the \( N\to\infty \) behavior of the GMRES residual error can be determined exactly. 
    To handle cases where the right hand side depends on the random matrix, we study the pseudospectra and numerical range of Ginibre matrices and prove a restricted version of Crouzeix's conjecture.
\end{abstract}

\begin{keywords}
GMRES, 
random matrices,
numerical range, 
pseudospectrum, 
Crouzeix's conjecture
\end{keywords}

\begin{AMS}
68Q25, 
65F35, 
15A60 
\end{AMS}

\section{Introduction}

Solving linear systems of equations \( \vec{A} \vec{x} = \vec{b} \) is one the most important tasks in the computational sciences, and Krylov subspace methods are among the most widely used algorithms for this task.
If \( \vec{A} \) is Hermitian, or more generally normal, then the (exact arithmetic) behavior of Krylov subspace methods is comparatively well understood.
On the other hand, when \( \vec{A} \) is non-normal, the behavior of Krylov subspace methods can be extremely colorful and remains an ongoing area of research \cite{carson_liesen_strakos_22}. 
In this paper, we analyze behavior of GMRES on systems involving a scaled and shifted \( N\times N \) complex Gaussian random matrix \( \vec{A}_N \) called a Ginibre matrix (see \cref{sec:GMRES_random}).
While such matrices are non-normal (their eigenvalue condition number grows linearly with the matrix size \( N \) \cite{chalker_mehlig_98}) one would be incorrect to assume this is a difficult problem for GMRES.
Thus, this paper provides yet an another example of the statement of Edelman and Rao that ``it is a mistake to link psychologically a random matrix with the intuitive notion of a `typical' matrix'' \cite{edelman_rao_05}.

Like other Krylov subspace methods, GMRES applied for \( k \) steps to the system \( \vec{A}\vec{x} = \vec{b} \) outputs an approximation \( p(\vec{A})\vec{b} \) to \( \vec{A}^{-1} \vec{b} \) where \( p \) is a degree \( k-1 \) polynomial. 
The residual \( \vec{r}^{(k)} \) of the $k$th GMRES iterate is characterized by the fact that it has minimal 2-norm among all approximations of this form.
That is,
\begin{align}
    \label{eqn:GMRES_def}
    \|\vec{r}^{(k)}\|
    = \min_{\substack{\deg(p)< k}} \|\vec{b} - \vec{A} p(\vec{A})\vec{b}\|
    = \min_{\substack{\deg(p)\leq k\\ p(0)=1}} \|p(\vec{A})\vec{b}\|.
\end{align}
The expression \cref{eqn:GMRES_def} depends on the right hand side vector \( \vec{b} \) and its relation to the matrix \( \vec{A} \).
Often, we would like to obtain bounds for \( \| \vec{r}^{(k)} \| \) that do not depend on \( \vec{b} \) in more than a trivial way.
Perhaps the simplest such bound is
\begin{equation*}
    \frac{\|\vec{r}^{(k)}\|}{\| \vec{b} \|} \leq \min_{\substack{\deg(p)\leq k\\ p(0)=1}} \|p(\vec{A})\|,
\end{equation*}
where \( \| p(\vec{A}) \| \) denotes the induced operator 2-norm of the matrix \( p(\vec{A}) \). 
The polynomial attaining this bound is typically called the ideal GMRES polynomial, and such polynomials have been studied in a range of settings \cite{greenbaum_trefethen_94,faber_liesen_tichy_13}.

For normal matrices, bounding \( \| p(\vec{A}) \| \) simply amounts to bounding \( p \) on the eigenvalues of \( \vec{A} \).
When the eigenvalues of \( \vec{A} \) are real, i.e. if \( \vec{A} \) is Hermitian, then this is essentially a problem in classical approximation theory.
However, for non-normal matrices, the eigenvalues alone are not enough to determine the convergence of GMRES. 
In fact, among all \( N\times N \) matrices with \( N \) prescribed eigenvalues, there exists a matrix \( \vec{A} \) and right hand side \( \vec{b} \) such that the corresponding GMRES residual norms \( \| \vec{r}^{(k)} \| \) for \( k=0,1,\ldots, N-1 \) are equal to any desired non-increasing positive sequence \cite{greenbaum_ptak_strakos_96}.

Therefore, in order to relate the estimation of \( \| p(\vec{A}) \| \) to a problem in scalar approximation theory, one must consider some set other than eigenvalues.
An open set \( \Omega\subset \mathbb{C} \) is said to be a \( K \)-spectral set for \( \vec{A} \) if for all functions \( f \) analytic on \( \Omega \) and extending continuously to the boundary \( \partial \Omega \),
\begin{equation*}
    \| f(\vec{A}) \| \leq K \| f \|_{\Omega}.
\end{equation*}
Here \( \| f \|_\Omega := \sup_{z\in \Omega} |f(z)| \).
For convenience, we will denote by \( C(\Omega, \vec{A}) \) the smallest value \( K \) so that \( \Omega \) is a \( K \)-spectral set for \( \vec{A} \); i.e. 
\begin{equation*}
    C(\Omega, \vec{A}) := \sup \{ \| f(\vec{A}) \| / \| f \|_{\Omega} : \text{\(f\) analytic on \( \Omega \) and continuous on \(\partial \Omega\)} \}.
\end{equation*}
Thus, for any set \( \Omega \) and matrix \( \vec{A} \), 
\begin{align}
    \label{eqn:GMRES_Kss}
    \frac{\|\vec{r}^{(k)}\|}{\| \vec{b} \|} \leq C(\Omega, \vec{A})  \min_{\substack{\deg(p)\leq k\\ p(0)=1}} \| p \|_{\Omega}.
\end{align}
Also, note that the maximum modulus principle implies that
\begin{align}\label{eq:subset}
    C(\Omega',\vec A) \leq C(\Omega,\vec A),
\end{align}
if $\Omega \subset \Omega'$.

One standard choice for \( \Omega \) is the numerical range 
\begin{equation*}
        W(\vec{A}) := \{ \vec{v}^\cT \vec{A} \vec{v} : \| \vec{v} \| = 1 \} 
\end{equation*}
for which is conjectured that \( C(W(\vec{A}),\vec{A}) \leq 2 \) for any matrix \( \vec{A} \) \cite{crouzeix_07}.
Alternately, we might choose \( \Omega \) to be the \( \epsilon \)-pseudospectrum  
\begin{equation*}
    \Lambda_{\epsilon}(\vec{A}) := \{ z \in \mathbb{C} : \|R(z,\vec{A}) \| \geq \epsilon^{-1} \}
\end{equation*}
where \( R(z,\vec{A}) := (z\vec{I} - \vec{A})^{-1} \) is the resolvent.
For any \( \epsilon > 0 \), using the fact that \( \| R(z,\vec{A}) \| = \epsilon^{-1} \) for \( z\in \partial \Lambda_{\epsilon} \),  
\begin{align}
    \label{eqn:GMRES_PS}
    \| f(\vec{A}) \|
    &= \left\| \frac{1}{2\pi \ii}\int_{\partial \Lambda_{\epsilon}} f(z) R(z,\vec{A}) \d{z} \right\|
    \leq \frac{\operatorname{len}(\partial \Lambda_{\epsilon}(\vec{A}))}{2\pi \epsilon} \| f \|_{\Lambda_{\epsilon}}.
\end{align}
That is, \( \Lambda_{\epsilon} \) is a \(   \operatorname{len}(\partial \Lambda_{\epsilon}(\vec{A})) / (2\pi \epsilon) \)-spectral set for \( \vec{A} \).

Both pseudospectra and the numerical range are widely used in the study of non-normal matrices \cite{trefethen_embree_05}.
For normal matrices, \( \Lambda_{\epsilon}(\vec{A}) \) consists of the union of disks of radius \( \epsilon \) about the eigenvalues, but for non-normal matrices, \( \Lambda_{\epsilon}(\vec{A}) \) can be significantly larger.
Therefore, the size of \( \Lambda_{\epsilon}(\vec{A}) \) can be viewed as a measure of the non-normality of \( \vec{A} \).
Likewise, the numerical range of a normal matrix is simply the convex hull of the eigenvalues while the numerical range can be significantly larger for non-normal matrices.

In the remainder of this paper, we consider GMRES applied to a system of equations involving a scaled and shifted Ginibre matrix.
This paper is mostly expository, with the aim of further highlighting connections between several areas of mathematics including numerical linear algebra, matrix analysis, and random matrix theory.
It is clear there is great potential for cross-pollination of ideas between these fields, so we hope that this paper will serve to further enable this process. 
Thus, while many of the statements we make are unlikely to surprise the right expert, we believe the connections between disciplines are of interest to a broad audience.
Technical aspects of our proofs are largely outsourced to the literature, as our aim is to provide a high level perspective on the topics at hand. 

\section{Preliminaries}

In this section we outline some basic notation and well as review some standard definitions and results regarding the convergence of random variables.

\subsection{Notation}

We denote by \( \mathcal{D}(c,r) \) the closed disk of radius \( r \) centered at \( c \) and by \( \mathcal{A}(\tilde{r},r) \) the closed annulus with inner radius \( \tilde{r} \) and outer radius \( r \) centered at the origin.

The Hausdorff distance between sets \( X \) and \( Y \) is denoted \( d_H(X,Y) \) and defined as
\begin{equation*}
    d_H(X,Y) = \max\left\{ \sup_{x\in X} \inf_{y\in Y} |x-y|,~ \sup_{y\in Y} \inf_{x\in X} |x-y| \right\}.
\end{equation*}
In this paper, one of \( X \) or \( Y \) will always be a closed disk in which case we often use the following lemma:
\begin{lemma}
    \label{thm:hausdorff_disks}
Let \( S \subset \mathbb{C} \) be a closed subset of the complex plane and \( r > 0 \).
Suppose that for some \( \epsilon > 0 \), \( \mathcal{D}(0,r-\epsilon) \subseteq S \subseteq \mathcal{D}(0,r+\epsilon) \).
Then,
\begin{equation*}
    d_H(S,\mathcal{D}(0,r)) < \epsilon.
\end{equation*}
Further, if \( S \) is convex, then the converse is also true.
\end{lemma}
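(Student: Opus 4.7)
The lemma has two directions and I would treat them separately.

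For the forward direction, I would compute the two halves of the Hausdorff distance directly. For any $x \in S \subseteq \mathcal{D}(0,r+\epsilon)$, the nearest point in $\mathcal{D}(0,r)$ is $x$ itself when $|x|\leq r$ and $r\,x/|x|$ otherwise, so $\inf_{y\in\mathcal{D}(0,r)}|x-y| = \max\{|x|-r,0\} \leq \epsilon$. Conversely, for any $y \in \mathcal{D}(0,r)$, the point $(r-\epsilon)\,y/|y|$ (or $0$, if $y=0$) lies in $\mathcal{D}(0,r-\epsilon)\subseteq S$, so $\inf_{x\in S}|x-y| \leq \bigl|\,|y|-(r-\epsilon)\bigr| \leq \epsilon$. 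Taking suprema gives $d_H(S,\mathcal{D}(0,r))\leq \epsilon$, and the strict inequality follows under mild additional care (for instance, whenever the hypothesis holds with some $\epsilon' < \epsilon$).

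For the converse, I assume $S$ is closed, convex, and satisfies $d_H(S,\mathcal{D}(0,r))<\epsilon$. The inclusion $S \subseteq \mathcal{D}(0,r+\epsilon)$ requires no convexity: any $x \in S$ has some $y \in \mathcal{D}(0,r)$ with $|x-y|<\epsilon$, hence $|x|\leq |y|+\epsilon \leq r+\epsilon$. The nontrivial inclusion is $\mathcal{D}(0,r-\epsilon)\subseteq S$, which I would prove by contrapositive using a separating hyperplane. Suppose some $z$ with $|z|\leq r-\epsilon$ does not belong to $S$. Because $S$ is closed and convex, there is a unique nearest point $p\in S$ to $z$, and the line through $p$ perpendicular to $z-p$ is a supporting hyperplane of $S$ separating $S$ from $z$. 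Consider
\begin{equation*}
    z' := z + \epsilon\,\frac{z-p}{|z-p|}.
\end{equation*}
Then $|z'|\leq |z|+\epsilon \leq r$, so $z'\in\mathcal{D}(0,r)$. On the other hand, $z'$ lies on the opposite side of the supporting hyperplane from $S$ at signed distance $|z-p|+\epsilon$ from that hyperplane, so $\inf_{x\in S}|z'-x|\geq |z-p|+\epsilon > \epsilon$, contradicting $d_H(S,\mathcal{D}(0,r))<\epsilon$.

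The main obstacle is the converse, and specifically the role of convexity: without it the statement fails (e.g., a disk with a small puncture removed can be Hausdorff-close to $\mathcal{D}(0,r)$ while omitting points of $\mathcal{D}(0,r-\epsilon)$). The supporting-hyperplane/nearest-point construction is the standard way to exploit convexity in such two-sided inclusion statements, and the choice of direction $(z-p)/|z-p|$ is what converts a missing interior point into a \emph{witness} point inside $\mathcal{D}(0,r)$ that is provably far from $S$. The strictness of the inequality is handled automatically since $|z-p|>0$ whenever $z\notin S$.
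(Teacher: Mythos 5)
Your proof is correct and follows essentially the same approach as the paper in both directions. The forward direction is the same direct computation of the two one-sided Hausdorff infima. For the converse, you use the nearest-point projection of the missing point $z$ onto $S$ to produce the supporting hyperplane, and push $z$ outward by $\epsilon$ along the outward normal to get a witness $z' \in \mathcal{D}(0,r)$ separated from $S$ by more than $\epsilon$; the paper instead takes the boundary point $y$ of $S$ having the same argument as the test point $x \in \mathcal{D}(0,r-\epsilon)$, picks any supporting line of $S$ at $y$, and moves along its outward normal until hitting $\partial\mathcal{D}(0,r)$. Both constructions are supporting-hyperplane arguments producing a point of $\mathcal{D}(0,r)$ farther than $\epsilon$ from $S$, so they are equivalent in substance; yours is arguably a bit cleaner since the projection theorem hands you the supporting hyperplane for free and avoids the implicit assumption that $S$ is star-shaped about the origin. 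You also correctly flag that the hypothesis $\mathcal{D}(0,r-\epsilon)\subseteq S\subseteq \mathcal{D}(0,r+\epsilon)$ only yields $d_H(S,\mathcal{D}(0,r)) \leq \epsilon$, not the strict inequality claimed; this is an imprecision in the lemma as stated (for instance $S=\mathcal{D}(0,r+\epsilon)$ gives $d_H = \epsilon$ exactly), and the paper's proof has the same gap without remarking on it. The non-strict version is all that is used downstream, so this is cosmetic, but your instinct to point it out is right.
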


\subsection{Convergence of random variables}
The sequence \( X_1, X_2, \ldots  \) is said to converge to a random variable \( X \) in probability if,
for all \( \epsilon > 0 \),
\begin{equation*}
    \lim_{N\to\infty} \PP \Big[ | X_N - X| < \epsilon \Big] = 1.
\end{equation*}
Alternately, this sequence is said to converge to \( X \) almost surely if, 
\begin{equation*}
    \PP\Big[ \lim_{N\to\infty} X_N = X \Big] = 1.
\end{equation*}
We respectively denote convergence in probability and almost surely by
\begin{equation*}
    X_N \prto X
    \qquad\text{and}\qquad
    X_N \asto X.
\end{equation*}
Almost sure convergence is equivalent to the condition that, for all \( \epsilon > 0 \),
\begin{align}
\label{eqn:as_eps}
    \bOne[ |X_N - X| < \epsilon ] \asto 1.
\end{align}
Here \( \bOne[ \texttt{true} ] = 1 \) and \( \bOne[\texttt{false}] = 0 \).
If \( A_N \) and \( B_N \) are sequences of events, we also have
\begin{equation}
\label{eqn:as_cond}
    \bOne[A_N] \asto 1 
    \qquad\Longrightarrow\qquad
    \bOne[B_N] \asto 1
    ,\qquad \text{if }A_N\subset B_N.
\end{equation}

For \( c \in \mathbb{R} \) fixed, we write
\begin{equation*}
    X_N \lesim c 
    ~\text{almost surely as \( N\to\infty \)}
\end{equation*}
if, for all \( \epsilon > 0 \),
\begin{equation*}
    \bOne[X_N < c + \epsilon] \asto 1.
\end{equation*}

\section{GMRES on random systems}
\label{sec:GMRES_random}

Let \( \vec{G}_N = \frac{1}{\sqrt{2N}}(\vec{X}_N + \ii \vec{Y}_N) \)  where \( \vec{X}_N \) and \( \vec{Y}_N \) are independent \( N \times N \) matrices with independently and identically distributed (iid) real standard Gaussian entries.\footnote{While the size of the matrices we deal with will vary (since \( N \) is variable), we assume are we are working with a single probability space and have a semi-infinite array of random variables defined on this probability space.  Then an $N \times N$ matrix is formed by taking a principal subblock of this infinite array. }
The matrix \( \vec{G}_N \) has iid standard complex normal entries and is called a (complex) Ginibre matrix.
Our main aim is to analyze the residual norm \( \| \vec{r}_N^{(k)} \| \) corresponding to the GMRES algorithm applied for \( k \) steps to linear systems of the form \( (\vec{I} + \sigma \vec{G}_N ) \vec{x} = \vec{b} \), where \( \sigma \in (0,1) \) is some fixed constant.
Similar results can be expected to hold for non-Gaussian entries satisfying moment conditions; i.e. universality.

In \cref{sec:GMRES_lim},  we consider the case when \( \vec{b} \) is independent of \( \vec{A} \). 
In this case, we can describe an exact the distributional parameterization of the Arnoldi algorithm underlying GMRES. 
This allows us to characterize the residual norm:
\begin{restatable}{theorem}{GMRESlim}
    \label{thm:GMRES_lim_rate}
    Let \( \vec{G}_N \) be an \( N\times N \) complex Ginibre matrix.
    Then, for \( \sigma \in (0,1) \) and \( \vec{b}_N \) independent of \( \vec{G}_N \), the step \( k \) GMRES residual norm \( \| \vec{r}_N^{(k)} \| \) for the linear system \( (\vec{I} + \sigma \vec{G}_N) \vec{x} = \vec{b} \)
    satisfies
    \begin{equation*}
        \frac{\| \vec{r}_N^{(k)} \|}{\| \vec{b}_N \|}
        \prto
        \left( \frac{1-\sigma^2}{1-\sigma^{2+2k}}\right)^{1/2} \sigma^k.
    \end{equation*}
\end{restatable}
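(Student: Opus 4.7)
The plan is to reduce the analysis of $\|\vec{r}_N^{(k)}\|/\|\vec{b}_N\|$ to the residual of an explicit, deterministic least-squares problem of size $(k+1)\times k$ by passing to the $N\to\infty$ limit of the Arnoldi Hessenberg matrix. By the standard GMRES identity, if $(\vec{I}+\sigma\vec{G}_N)\vec{Q}_k = \vec{Q}_{k+1}\vec{H}_{k+1,k}$ is the order-$k$ Arnoldi decomposition with starting vector $\vec{q}_1 = \vec{b}_N/\|\vec{b}_N\|$, then
\begin{equation*}
    \frac{\|\vec{r}_N^{(k)}\|}{\|\vec{b}_N\|} = \min_{\vec{y}\in\mathbb{C}^k} \|\vec{e}_1 - \vec{H}_{k+1,k}\vec{y}\|.
\end{equation*}
Moreover, Arnoldi on $\vec{I}+\sigma\vec{G}_N$ and on $\vec{G}_N$ (with the same starting vector) produce the same orthonormal basis, and hence $\vec{H}_{k+1,k} = \vec{I}_{k+1,k} + \sigma\tilde{\vec{H}}_{k+1,k}$, where $\tilde{\vec{H}}_{k+1,k}$ is the Hessenberg matrix of Arnoldi applied to $\vec{G}_N$ alone.

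Since $\vec{b}_N$ is independent of $\vec{G}_N$, unitary invariance of the Ginibre law lets us, after conditioning on $\vec{b}_N$, assume the starting vector is any fixed unit vector (say $\vec{e}_1$), and invoke the classical distributional parameterization of the Arnoldi Hessenberg: up to a probability-zero breakdown event, the entries $\tilde{h}_{ij}$ with $i\leq j$ are independent complex Gaussians of variance $1/N$, while the subdiagonal entries $\tilde{h}_{j+1,j}$ are independent and distributed as the norm of a complex Gaussian vector in $\mathbb{C}^{N-j}$ with entries of variance $1/N$. As $N\to\infty$ with $k$ fixed, the law of large numbers gives $\sigma\tilde{h}_{j+1,j}\to\sigma$ in probability, while Chebyshev gives $\sigma\tilde{h}_{ij}\to 0$ in probability for $i\leq j$. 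Therefore $\vec{H}_{k+1,k}$ converges entrywise in probability to the deterministic $(k+1)\times k$ lower-bidiagonal matrix $\vec{H}^\infty$ with $1$ on the diagonal and $\sigma$ on the subdiagonal.

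Because the least-squares residual is a continuous function of the matrix entries on the open set of full-column-rank matrices, the continuous mapping theorem reduces the problem to computing $\min_{\vec{y}}\|\vec{e}_1-\vec{H}^\infty\vec{y}\|$. The normal equation $(\vec{H}^\infty)^*\vec{H}^\infty\vec{y} = \vec{e}_1\in\mathbb{C}^k$ is tridiagonal Toeplitz with diagonal $1+\sigma^2$ and off-diagonals $\sigma$, equivalently the recurrence
\begin{equation*}
    \sigma y_{j-1} + (1+\sigma^2)y_j + \sigma y_{j+1} = \delta_{j,1}, \qquad y_0 = y_{k+1} = 0,
\end{equation*}
whose characteristic roots are $-\sigma$ and $-1/\sigma$. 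A direct calculation gives $y_1 = (1-\sigma^{2k})/(1-\sigma^{2k+2})$, so
\begin{equation*}
    \min_{\vec{y}}\|\vec{e}_1-\vec{H}^\infty\vec{y}\|^2 = 1 - y_1 = \frac{\sigma^{2k}(1-\sigma^2)}{1-\sigma^{2k+2}},
\end{equation*}
which upon taking square roots matches the claimed limit.

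The principal obstacle is the first step: rigorously establishing the distributional parameterization of $\tilde{\vec{H}}_{k+1,k}$ and then upgrading this to convergence of $\vec{H}_{k+1,k}\to\vec{H}^\infty$ in a mode strong enough to pass through the non-linear least-squares functional. Concretely this requires the unitary-invariance reduction, verifying that Arnoldi does not break down (which holds almost surely, as the subdiagonal entries are almost surely positive), and confirming that $\vec{H}^\infty$ has full column rank so that the residual is continuous near the limit point. Once these distributional ingredients are in place, the remaining pieces---the continuous mapping argument and the explicit solution of the constant-coefficient three-term recurrence---are routine.
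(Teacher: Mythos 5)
Your approach matches the paper's: both reduce by unitary invariance to an explicit distributional model for the Arnoldi Hessenberg matrix of a Ginibre matrix (which the paper constructs via a recursive Householder argument in the spirit of Trotter/Dumitriu--Edelman, and which you rightly flag as the step requiring proof), pass to the $N\to\infty$ limit to obtain the deterministic bidiagonal $(k+1)\times k$ matrix $\vec{H}^\infty$, and appeal to continuity of the residual functional. The only difference is in the final algebra: you solve the tridiagonal Toeplitz normal equations for the limiting least-squares problem, while the paper invokes Meurant's closed-form residual formula and inverts the lower-triangular block of $\vec{H}^\infty$; both computations yield the same limit.
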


If \( \vec{b} \) is allowed to depend on \( \vec{G}_N \), then this estimate cannot be expected to hold.
We therefore turn to \cref{eqn:GMRES_Kss} with the aim of characterizing some \( K \)-spectral sets of \( \vec{G}_N \) (and therefore \( \vec{I} + \sigma \vec{G}_N \)). 
In \cref{sec:pseudospectra} we use existing theory to characterize the \( \epsilon \)-pseudospectrum of \( \vec{G}_N \):
\begin{restatable}{theorem}{pseudospectra}
\label{thm:pseudospectra}
Let \( \vec{G}_N \) be a complex Ginibre matrix of size \( N\times N \).
Then, for any $\epsilon>0$,
\begin{equation*}
    d_H \big( \Lambda_{\epsilon}(\vec{G}_N),\mathcal{D}(0,\mathfrak{e}^{-1}(\epsilon^2)) \big) \asto 0.
\end{equation*}
\end{restatable}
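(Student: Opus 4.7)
The plan is to reduce the Hausdorff convergence to a sandwich of the form $\mathcal{D}(0, r-\delta) \subseteq \Lambda_\epsilon(\vec{G}_N) \subseteq \mathcal{D}(0, r+\delta)$ (holding eventually almost surely for every fixed $\delta > 0$, with $r := \mathfrak{e}^{-1}(\epsilon^2)$), to which \cref{thm:hausdorff_disks} applies; letting $\delta \downarrow 0$ along a countable sequence and invoking \cref{eqn:as_eps} then yields the conclusion. Since $z \in \Lambda_\epsilon(\vec{G}_N)$ iff $\sigma_{\min}(z\vec{I}-\vec{G}_N) \leq \epsilon$, and since the distribution of $\sigma_{\min}(z\vec{I}-\vec{G}_N)$ depends on $z$ only through $|z|$ (by rotational invariance of the complex Ginibre law), the sandwich reduces to showing that $\sigma_{\min}(z\vec{I}-\vec{G}_N)^2 \asto \mathfrak{e}(|z|)$ uniformly on compact subsets of $\mathbb{C}$.

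I would obtain this in two steps. First, the pointwise statement $\sigma_{\min}(z_0\vec{I}-\vec{G}_N)^2 \asto \mathfrak{e}(|z_0|)$ at each fixed $z_0$ is the random matrix theory input, coming from free-probabilistic / Stieltjes-transform characterizations of the limiting singular value distribution of shifted i.i.d.\ matrices (Haagerup--Larsen, Dozier--Silverstein), supplemented by small-ball / lower-tail estimates (\'{S}niady, Rudelson--Vershynin) ensuring that $\sigma_{\min}$ actually attains the edge of the limiting support rather than being pushed anomalously low. Second, I would promote pointwise a.s.\ convergence to uniform a.s.\ convergence on any compact set by intersecting the full-measure events at a countable dense set of $z_0$'s (still a measure-one event) and invoking the deterministic $1$-Lipschitz bound
\begin{equation*}
    |\sigma_{\min}(z_1\vec{I}-\vec{G}_N) - \sigma_{\min}(z_2\vec{I}-\vec{G}_N)| \leq |z_1 - z_2|
\end{equation*}
together with continuity of $\mathfrak{e}$ to interpolate between net points.

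Once uniform convergence is in hand, the inner inclusion follows because monotonicity of $\mathfrak{e}$ gives $\sup_{|z|\leq r - \delta} \mathfrak{e}(|z|) < \epsilon^2$, hence $\sigma_{\min}(z\vec{I}-\vec{G}_N) < \epsilon$ uniformly on this set eventually almost surely. The outer inclusion on the bounded annulus $r + \delta \leq |z| \leq R$ follows similarly from $\inf_{r+\delta \leq |z| \leq R} \mathfrak{e}(|z|) > \epsilon^2$. For the unbounded exterior $|z| > R$ (with $R$ chosen sufficiently large), the exclusion is automatic from $\|\vec{G}_N\| \asto 2$, since then $\sigma_{\min}(z\vec{I}-\vec{G}_N) \geq |z| - \|\vec{G}_N\| > \epsilon$ eventually almost surely.

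The main obstacle is the random matrix theory step in the regime $|z|$ near the boundary of the support of the circular law, where the smallest singular value of $z\vec{I}-\vec{G}_N$ can be anomalously small due to near-coalescence with extreme eigenvalues; establishing an a.s.\ (rather than in-probability or in-expectation) match to the deterministic curve $\mathfrak{e}$ in this regime is the delicate part, and is precisely what pins down the form of $\mathfrak{e}$ appearing in the statement.
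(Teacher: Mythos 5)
Your plan follows essentially the same route as the paper: (i) use Dozier--Silverstein to identify the support $[\mathfrak{e}(z),\mathfrak{f}(z)]$ of the limiting spectral density of $\vec{Y}_N^z = (z\vec{I}-\vec{G}_N)^*(z\vec{I}-\vec{G}_N)$ at each fixed $z$; (ii) combine edge behavior with a ``no eigenvalues outside the bulk'' result to pin $\lambda_{\min}(\vec{Y}_N^z)$ to $\mathfrak{e}(z)$; (iii) promote pointwise a.s.\ convergence to uniform convergence on a compact set via a net argument; and (iv) assemble the disk sandwich $\mathcal{D}(0,r-\delta)\subseteq\Lambda_\epsilon\subseteq\mathcal{D}(0,r+\delta)$ and invoke \cref{thm:hausdorff_disks}. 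So this is not a different proof, but there is one genuine technical improvement on your side worth noting: you work directly with $\sigma_{\min}(z\vec{I}-\vec{G}_N)$, which satisfies the trivial deterministic bound $|\sigma_{\min}(z_1\vec{I}-\vec{G})-\sigma_{\min}(z_2\vec{I}-\vec{G})|\le|z_1-z_2|$, whereas the paper works with $\|R(z,\vec{G}_N)\|=\sigma_{\min}(z\vec{I}-\vec{G}_N)^{-1}$ and must prove a conditional local Lipschitz bound for it (\cref{sec:resolvent_lip}) that requires separately controlling $\|\vec{G}_N\|$, $|z|$, and $\|R\|$ on the annulus. Your choice sidesteps that entire lemma. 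You also explicitly treat the $|z|>R$ region via $\|\vec{G}_N\|\asto 2$, which the paper leaves implicit.

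Two small cautions. First, your uniform claim ``$\sigma_{\min}^2\asto\mathfrak{e}(|z|)$ on compact subsets of $\mathbb{C}$'' needs $\mathfrak{e}$ extended to $[0,1)$ by $0$; the point $|z|\le 1$ is not covered by the Dozier--Silverstein edge description and requires the separate input that $\sigma_{\min}(z\vec{I}-\vec{G}_N)\to 0$ on the closed unit disk (the paper also treats this case separately, citing it as known). Second, your proposed references for step (ii) are not quite the right tool: small-ball/lower-tail bounds on $\sigma_{\min}$ (\'Sniady, Rudelson--Vershynin) give quantitative non-degeneracy but do not by themselves yield the sharp statement $\lambda_{\min}(\vec{Y}_N^z)>\mathfrak{e}(z)-\epsilon$ eventually; what is needed is a ``no eigenvalues outside the support'' theorem for information-plus-noise matrices (Bai--Silverstein type, or Vallet--Loubaton--Mestre), which the paper cites. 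Your description of the mechanism is right; the citation is off.
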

\noindent 
Here \( \mathfrak{e}:[1,\infty) \to [0,\infty) \) is a deterministic increasing function with \( \mathfrak{e}(1) = 0 \) defined by 
\begin{align}
    \label{eqn:frake}
    \mathfrak{e}(z) := \frac{8 d^2-(9-8 d)^{3/2} - 36 d +27}{8(1-d)}, \qquad d:= 1-|z|^2
\end{align}
and $\mathfrak{e}^{-1}:[0,\infty) \to [1,\infty)$ is the inverse of $\mathfrak{e}$.

The numerical range of \( \vec{G}_N \) is also nearly circular for \( N \) large. 
Specifically, it is known \cite{collins_gawron_litvak_zyczkowski_14} that
\begin{equation*}
    d_H(W(\vec{G}_N), \mathcal{D}(0,\sqrt{2}))
    \asto 0.
\end{equation*}
In \cref{sec:crouzeix} we use this fact, in conjunction with existing theory on the growth of matrix functions on the numerical range, to show that the numerical range of a Ginibre matrix is a 2-spectral set almost surely as \( N\to\infty \). 
More precisely:
\begin{restatable}{theorem}{crouzeix}
\label{thm:crouzeix}
Let \( \vec{G}_N \) be a complex Ginibre matrix of size \( N\times N \).
Then, 
\begin{equation*}
    C(W(\vec{G}_N), \vec{G}_N) \lesim 2
\end{equation*}
almost surely as \( N\to\infty \).
\end{restatable}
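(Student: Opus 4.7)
The plan is to combine the near-circularity of the numerical range $W(\vec{G}_N)$ with the classical fact that closed disks are $2$-spectral sets. Fix $\epsilon > 0$. Since $W(\vec{G}_N)$ is convex by the Toeplitz--Hausdorff theorem, the cited convergence $d_H(W(\vec{G}_N),\mathcal{D}(0,\sqrt{2})) \asto 0$ together with \cref{thm:hausdorff_disks} implies that for every fixed $\delta > 0$,
\begin{equation*}
    \mathcal{D}(0,\sqrt{2}-\delta) \subseteq W(\vec{G}_N) \subseteq \mathcal{D}(0,\sqrt{2}+\delta)
\end{equation*}
eventually almost surely; in particular the numerical radius satisfies $w(\vec{G}_N) \le \sqrt{2}+\delta$.

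The key classical input I would invoke is the Okubo--Ando theorem (a refinement of Berger's power inequality): whenever $w(\vec{A}) \le R$, the closed disk $\mathcal{D}(0,R)$ is a $2$-spectral set for $\vec{A}$. Applied to $\vec{G}_N$ this immediately gives $C(\mathcal{D}(0,\sqrt{2}+\delta),\vec{G}_N) \le 2$ eventually almost surely, i.e.\ $\|g(\vec{G}_N)\| \le 2\|g\|_{\mathcal{D}(0,\sqrt{2}+\delta)}$ for every $g$ analytic on a neighborhood of $\mathcal{D}(0,\sqrt{2}+\delta)$.

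The nontrivial step is to convert this bound on the enclosing disk into a bound on the smaller set $W(\vec{G}_N)$, since \cref{eq:subset} runs the wrong way. Given $f$ analytic on $W(\vec{G}_N)$ with $\|f\|_{W(\vec{G}_N)} \le 1$, the strategy is to produce a ``near-identity'' holomorphic replacement $g$ on $\mathcal{D}(0,\sqrt{2}+\delta)$ with $\|g\|_{\mathcal{D}(0,\sqrt{2}+\delta)} \le 1 + o_\delta(1)$ and $g(\vec{G}_N)$ close to $f(\vec{G}_N)$ in operator norm. A natural route uses the Riemann map $\phi_N : \mathcal{D}(0,\sqrt{2}) \to W(\vec{G}_N)$: because both domains are convex and $W(\vec{G}_N) \to \mathcal{D}(0,\sqrt{2})$ in Hausdorff distance, Carath\'eodory kernel convergence forces $\phi_N \to \mathrm{id}$ uniformly on compact subsets, and one sets $g$ to be a holomorphic extension of $f \circ \phi_N^{-1}$ to $\mathcal{D}(0,\sqrt{2}+\delta)$.

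The principal obstacle is making this continuity-of-$C$ step quantitative: the replacement must work uniformly over \emph{all} admissible $f$ simultaneously, which requires boundary regularity for the conformal maps $\phi_N$ controlled by $d_H(W(\vec{G}_N),\mathcal{D}(0,\sqrt{2}))$ as well as an operator-norm estimate $\|(f\circ\phi_N^{-1})(\vec{G}_N) - f(\vec{G}_N)\| \to 0$ as $\delta\to 0$. Granted this technical lemma, combining with Okubo--Ando and sending $\delta \to 0$ yields $C(W(\vec{G}_N),\vec{G}_N) \le 2 + \epsilon$ eventually almost surely, which by \cref{eqn:as_cond} is exactly the claim $C(W(\vec{G}_N),\vec{G}_N) \lesim 2$.
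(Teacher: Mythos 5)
Your route and the paper's differ in a structurally important way. The paper works with an \emph{inscribed} disk $\tilde\Omega=\mathcal D(0,\sqrt2-\eta)\subset W(\vec G_N)$, for which \cref{eq:subset} gives the containment inequality in the useful direction, $C(W(\vec G_N),\vec G_N)\le C(\tilde\Omega,\vec G_N)$. It then invokes the perturbative Crouzeix--Palencia machinery of \cite{caldwell_greenbaum_li_18}: $\tilde\Omega$ is a $(2+\tilde\delta)$-spectral set where $\tilde\delta$ is an explicit boundary integral, and the resolvent control of \cref{thm:resolvent_bound} yields $|\tilde\delta|\le 2(r-\tilde r)\Delta^2 \to 0$. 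Nothing ever needs to be pushed to a larger domain. You instead apply Okubo--Ando to the \emph{circumscribed} disk $\mathcal D(0,\sqrt2+\delta)$, correctly observe that \cref{eq:subset} then runs the wrong way, and try to repair this by ``transporting'' $f$ outward via a conformal-map argument. That repair is where the proof actually lives, and it is left as a ``technical lemma.''

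Two concrete problems with the sketch as written. First, the proposed ``holomorphic extension of $f\circ\phi_N^{-1}$ to $\mathcal D(0,\sqrt2+\delta)$'' need not exist: a function analytic on $W(\vec G_N)$ and continuous up to the boundary generally admits no analytic continuation past $\partial W(\vec G_N)$, and when it does its sup-norm on the larger disk is uncontrolled. The standard fix is to precompose with a contraction, e.g.\ $g(z):=\bigl(f\circ\phi_N^{-1}\bigr)(cz)$ with $c=\sqrt2/(\sqrt2+\delta)$, which gives $\|g\|_{\mathcal D(0,\sqrt2+\delta)}\le 1$ exactly; your sketch does not do this. Second, and more fundamentally, the required estimate $\|g(\vec G_N)-f(\vec G_N)\|\to 0$ must hold \emph{uniformly over all admissible $f$}, and establishing this is essentially the whole theorem. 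It can plausibly be done --- the spectrum lies in $\mathcal D(0,1.1)$ almost surely, so a contour on $\partial\mathcal D(0,1.2)$ together with the Cauchy estimate $|f'|\lesssim(\sqrt2-1.2)^{-1}$ on that contour and the resolvent bound $\|R(z,\vec G_N)\|\lesssim\mathfrak e(1.2)^{-1/2}$ from \cref{thm:resolvent_bound} would control the difference --- but you defer exactly this step, and Carath\'eodory kernel convergence on its own (uniform only on compacta, with no quantitative boundary control) does not supply it. In short: the conceptual obstacle is correctly identified, but the argument that overcomes it is missing, whereas the paper's inscribed-disk formulation sidesteps both issues by design.
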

\noindent 

\begin{figure}
    \centering
    \includegraphics[scale=.8]{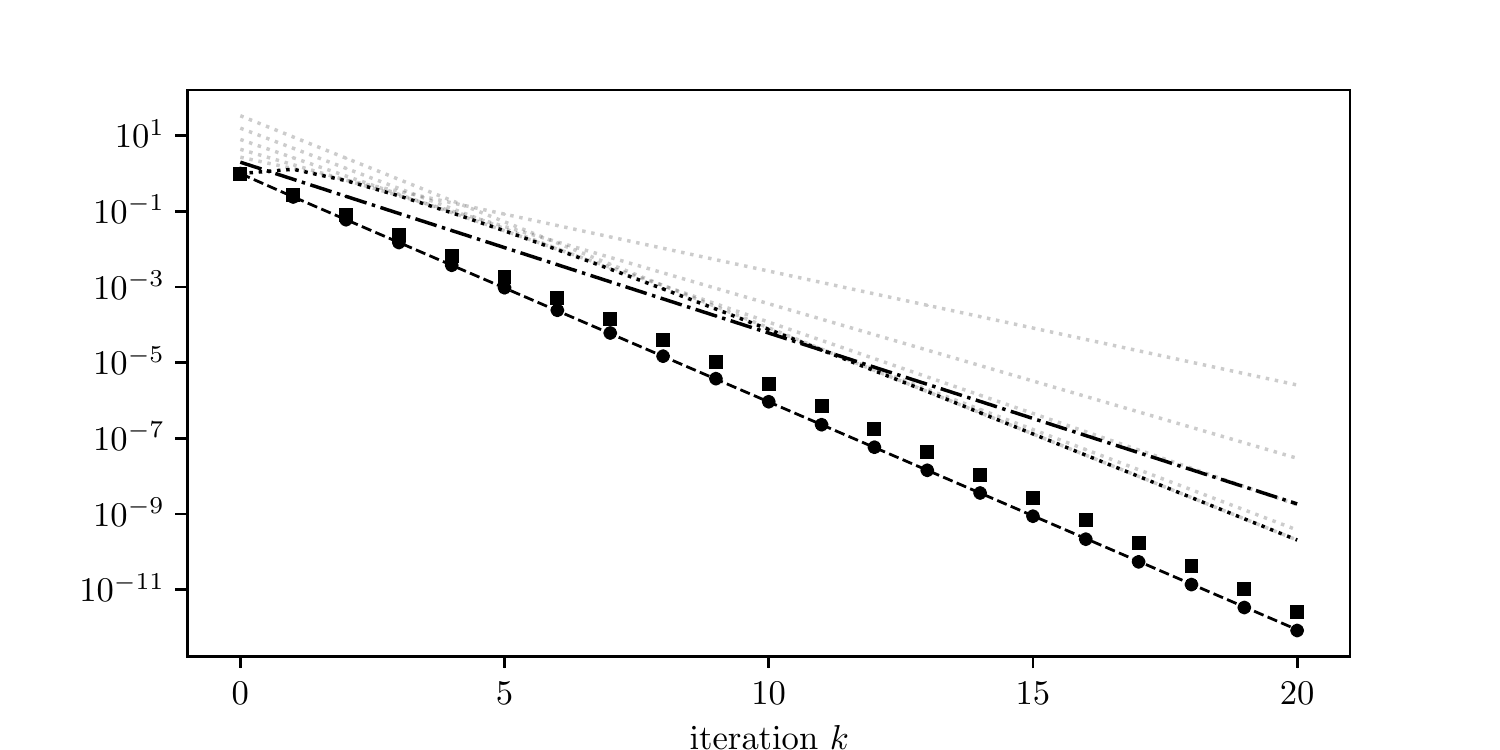}
    \caption{%
        Convergence of GMRES on linear systems \( (\vec{I} + \sigma \vec{G}_N) \vec{x} = \vec{b} \) for \( \vec{b} \) chosen independently and dependently of \( \vec{G}_N \). 
        When  $\vec{b}$ is chosen independently of $\vec{G}_N$, then the residual norm $\|\vec{r}_k\|$ of GMRES (circles) is described by \cref{thm:GMRES_lim_rate} (dashed line).
        If instead $\vec{b}$ is allowed to depend on $\vec{G}_N$, then the residual norm $\|\vec{r}_k\|$  of GMRES (squares) need not be described by \cref{thm:GMRES_lim_rate}. 
        However, the bounds in \cref{thm:GMRES_ideal_bound} based on the pseudospectrum (dotted lines) and numerical range (dash-dot line) are still relevant.
        In this experiment, $N=1500$ and $\sigma=1/4$.
    }
    \label{fig:GMRES_example}
\end{figure}

Since both the numerical range and pseudospectra are nearly circular for large \( N \), we may hope to obtain a bound in terms of \( \min \{ \| p \|_{\mathcal{D}(1,r)} : \deg(p) \leq k,~ p(0)=1 \} \).
This is an elementary problem which can be solved exactly. 
Indeed, on the disk \( \mathcal{D}(1,r) \), the minimal infinity norm degree \( k \) monic polynomial is \( (z-1)^{k} \), so we obtain our desired polynomial by rescaling the \( (z-1)^k \) to take value 1 at the origin; i.e. the minimizing polynomial is \( p(x) = (z-1)^{k}/(-1)^{k} = (1-z)^k\).
This gives the relation
\begin{equation}
    \label{eqn:minimax}
    \min_{\substack{\deg(p)\leq k\\p(0)=1}}  \| p \|_{\mathcal{D}(1,r)}
    =  r^{k}.
\end{equation}
Combining this with the above results we can prove the following bound.
\begin{restatable}{theorem}{GMRESidealbound}
    \label{thm:GMRES_ideal_bound}
    Let \( \vec{G}_N \) be an \( N\times N \) complex Ginibre matrix.
    Then, for \( \sigma \in (0,1) \) and \( \vec{b} \) possibly depending on \( \vec{G}_N \), the step \(  k \) GMRES residual norm \( \| \vec{r}_N^{(k)} \| \) for the linear system \( (\vec{I} + \sigma \vec{G}_N) \vec{x} = \vec{b} \) satisfies,
\begin{equation*}
    \frac{\| \vec{r}^{(k)} \|}{\| \vec{b} \|} \lesim  \min_{\eta>1}  \eta \mathfrak{e}(\eta)^{-1/2} (\sigma \eta)^{k}
    \qquad \text{and} \qquad
    \frac{\| \vec{r}^{(k)} \|}{\| \vec{b} \|} \lesim  2 (\sqrt{2}\sigma)^k 
\end{equation*}
almost surely as \( N\to\infty \).

\end{restatable}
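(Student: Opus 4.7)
The plan is to apply the $K$-spectral set bound \cref{eqn:GMRES_Kss} to $\vec{A} = \vec{I} + \sigma \vec{G}_N$ twice: once with $\Omega$ chosen to be a disk containing the pseudospectrum of $\vec{A}$ (for the first estimate), and once with $\Omega$ a disk containing the numerical range (for the second). In both cases the strategy is to transfer the relevant convergence result (\cref{thm:pseudospectra} or \cref{thm:crouzeix}) from $\vec{G}_N$ to $\vec{A}$ by a simple shift-and-scale, then use \cref{eq:subset} to pass from the nearly circular set to a slightly enlarged disk centered at $1$, on which the minimax polynomial is given exactly by \cref{eqn:minimax}.

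For the first bound, the resolvent identity $\|R(z, \vec{A})\| = \sigma^{-1}\|R((z-1)/\sigma, \vec{G}_N)\|$ gives the scaling $\Lambda_\epsilon(\vec{A}) = 1 + \sigma \Lambda_{\epsilon/\sigma}(\vec{G}_N)$, and hence \cref{thm:pseudospectra} implies
\begin{equation*}
    d_H\bigl(\Lambda_\epsilon(\vec{A}),\, \mathcal{D}(1,\sigma\mathfrak{e}^{-1}((\epsilon/\sigma)^2))\bigr) \asto 0.
\end{equation*}
Fix $\eta > 1$ and set $\epsilon = \sigma \sqrt{\mathfrak{e}(\eta)}$, so that the limiting radius is exactly $\sigma\eta$. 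For any $\delta > 0$, eventually almost surely $\Lambda_\epsilon(\vec{A}) \subset \mathcal{D}(1, \sigma\eta + \delta)$, so $\|R(z, \vec{A})\| \leq \epsilon^{-1}$ on the smooth contour $\partial \mathcal{D}(1, \sigma\eta + \delta)$. The Cauchy integral bound \cref{eqn:GMRES_PS} over this circle then yields $C(\mathcal{D}(1, \sigma\eta + \delta), \vec{A}) \lesim (\sigma\eta + \delta)/\epsilon$, and combining with \cref{eqn:minimax} in \cref{eqn:GMRES_Kss} gives $\|\vec{r}^{(k)}\|/\|\vec{b}\| \lesim (\sigma\eta + \delta)^{k+1}/\epsilon$. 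Sending $\delta \to 0$, substituting $\epsilon = \sigma\sqrt{\mathfrak{e}(\eta)}$, and minimizing over $\eta > 1$ produces the first bound.

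For the second bound, $W(\vec{A}) = 1 + \sigma W(\vec{G}_N)$, and the change of variables $f(w) = g((w-1)/\sigma)$ shows that $C(W(\vec{A}), \vec{A}) \leq C(W(\vec{G}_N), \vec{G}_N)$, which is $\lesim 2$ by \cref{thm:crouzeix}. The cited convergence $d_H(W(\vec{G}_N), \mathcal{D}(0,\sqrt{2})) \asto 0$ gives $d_H(W(\vec{A}), \mathcal{D}(1,\sqrt{2}\sigma)) \asto 0$, so for any $\delta > 0$, $W(\vec{A}) \subseteq \mathcal{D}(1, \sqrt{2}\sigma + \delta)$ eventually a.s. By \cref{eq:subset}, $C(\mathcal{D}(1, \sqrt{2}\sigma + \delta), \vec{A}) \lesim 2$, and combining with \cref{eqn:minimax} in \cref{eqn:GMRES_Kss} delivers $\|\vec{r}^{(k)}\|/\|\vec{b}\| \lesim 2(\sqrt{2}\sigma + \delta)^k$. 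Sending $\delta \to 0$ gives the second bound.

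The main technical subtlety lies in the first bound: \cref{eqn:GMRES_PS} as written involves $\operatorname{len}(\partial \Lambda_\epsilon)$, and Hausdorff convergence of $\Lambda_\epsilon$ to a disk does not in general control boundary length (the boundary could in principle oscillate within a thin annulus). I sidestep this by integrating instead over the enlarged smooth circle $\partial \mathcal{D}(1, \sigma\eta + \delta)$, whose length is simply $2\pi(\sigma\eta + \delta)$; the cost is only a vanishing inflation of the radius that is absorbed by the $\lesim$ notation.
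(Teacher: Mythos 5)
Your proof is correct and takes essentially the same approach as the paper's: use \cref{thm:pseudospectra} to obtain a disk that is a $K$-spectral set with $K \approx \eta\mathfrak{e}(\eta)^{-1/2}$ (via a Cauchy integral over a fixed smooth circle), use \cref{thm:crouzeix} together with \cref{eq:subset} for the $2$-spectral disk of radius $\sqrt{2}$, and then invoke \cref{eqn:GMRES_Kss} with \cref{eqn:minimax}. The only organizational difference is that the paper establishes the spectral-set bounds for $\vec{G}_N$ on disks centered at the origin and then shifts and scales to $\vec{I}+\sigma\vec{G}_N$, whereas you work directly with the pseudospectrum and numerical range of $\vec{A}$ itself; the two are equivalent under the resolvent identity you cite. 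Your remark about boundary length is a valid observation and is handled the same way the paper implicitly handles it — by integrating over the smooth circle $\partial\mathcal{D}(0,\eta)$ rather than over $\partial\Lambda_\epsilon$ — so there is no gap.
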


\Cref{fig:GMRES_example} shows a numerical example with \( N = 1500 \) and \( \sigma = 1/4 \).
As expected, when \( \vec{b} \) is independent of \( \vec{G}_N \), the convergence is determined by \cref{thm:GMRES_lim_rate}.
Once we have sampled the random matrix \( \vec{I} + \sigma \vec{G}_N \), we then attempt to find a \( \vec{b} \) which increases the value of \( \| \vec{r}^{(k)} \| \).
Maximizing the residual norm at a given iteration is a hard problem \cite{faber_liesen_tichy_13},  so instead of seeking to find the worst case \( \vec{b} \) for each \( k \), we simply compute the \( \vec{b} \) which maximizes \( \| p(\vec{I} + \sigma \vec{G}_N) \vec{b} \|/\|\vec{b}\|  \) when \( p = (z-1)^{10} \).
This appears to be sufficient to break agreement with the rate in \cref{thm:GMRES_lim_rate}. 
Even so, the bounds in \cref{thm:GMRES_ideal_bound} remain valid.

\section{Limiting behavior of GMRES with independent right hand sides}
\label{sec:GMRES_lim}

We begin by studying the case where \( \vec{b} \) is independent of \( \vec{G}_N \). 
It has been observed that many algorithms have essentially deterministic behavior when applied to random matrices of large dimension \cite{pfrang_deift_menon_13,deift_menon_olver_trogdon_14}.
This has been rigorously established for a range of iterative methods for linear systems including 
conjugate gradient and MINRES \cite{deift_trogdon_20,paquette_trogdon_20,ding_trogdon_21} and 
gradient and stochastic gradient descent \cite{paquette_lee_pedregosa_paquette_21,paquette_paquette_21,paquette_merrienboer_paquette_pedregosa_22}.
Some numerical experiments for the GMRES algorithm in the case \( \sigma=1 \) are given in \cite{deift_menon_olver_trogdon_14} where one sees non-deterministic behavior (see also \cite{zhang_trogdon_22}).

It is well known, and can be seen from the joint density of the matrix entries, that Ginibre matrices are invariant under unitary conjugation. 
That is, for any fixed unitary matrix \( \vec{Q} \), we have that \( \vec{Q} \vec{G}_N \vec{Q}^\cT \stackrel{\text{dist.}}{=} \vec{G}_N \). 
Thus, without loss of generality, we can assume that \( \vec{b}/ \| \vec{b} \| = \vec{e}_1 := [1, 0, \ldots, 0]^\T \) by applying a unitary transform \( \vec{Q} \) with \( \vec{Q} \vec{b} = \| \vec{b} \| \vec{e}_1 \) to \( \vec{b} \) and considering the system involving \( \vec{I} + \sigma \vec{Q} \vec{G}_N \vec{Q}^\cT \) and \( \| \vec{b} \| \vec{e}_1 \).

We now construct a unitary matrix \( \vec{V} \) so that \( \vec{V} (\vec{I} + \sigma \vec{G}_N) \vec{V}^\cT\) is upper-Hessenberg and \( \vec{V} \vec{e}_1 = \vec{e}_1 \).
The residual norm obtained by GMRES applied for \( k \) iterations to the system \( \vec{A} \vec{x} = \vec{b} \) is the same as GMRES applied for \( k \) iterations to \( \vec{V} \vec{A} \vec{V}^{\cT} \vec{x} = \vec{V} \vec{b} \) for any unitary matrix \( \vec{V} \) (even if \( \vec{V} \) depends on \( \vec{A} \)). 
Indeed, using the unitary invariance of the 2-norm, 
\begin{equation*}
    \| p(\vec{A})  \vec{b} \|
    = \| p( \vec{A} ) \vec{V}^\cT \vec{V} \vec{b} \|
    = \| \vec{V} p( \vec{A} ) \vec{V}^\cT \vec{V} \vec{b} \|
    = \| p( \vec{V}  \vec{A} \vec{V}^\cT) \vec{V} \vec{b} \|.
\end{equation*}

The approach is by constructing suitable Householder reflectors, and is similar in spirit to approaches for other matrix ensembles \cite{trotter_84,silverstein_85,dumitriu_edelman_02}.
For convenience, partition \( \vec{G}_N \) as 
\begin{equation*}
    \vec{G}_N = 
    \frac{1}{\sqrt{2N}} \begin{bmatrix}
        g  & \vec{y}^\T  \\
        \vec{x} & \vec{G} \\
    \end{bmatrix}.
\end{equation*}
Now, conditioning on the probability one event that \( \|\vec{v}\| \neq 0 \), 
define \( \vec{U} \) as the Householder reflector
\begin{equation*}
    \vec{U}
    = \begin{bmatrix}
        1 & \vec{0}^\T \\
        \vec{0} & \vec{F} \\
    \end{bmatrix}
    ,\qquad 
    \vec{F} = \vec{I} - 2 \frac{\vec{v}\vec{v}^\cT}{\vec{v}^\cT\vec{v}}
    ,\qquad
    \vec{v} = \| \vec{x} \| \vec{e}_1  - \vec{x}.
\end{equation*}
Then \( \vec{U} \vec{e}_1 = \vec{e}_1 \) and 
\begin{equation*}
    \vec{U} \vec{G}_N \vec{U}^\cT
    =
    \frac{1}{\sqrt{2N}}
    \begin{bmatrix}
        g  & \vec{y}^\T \vec{F}^\cT \\
        \vec{F}\vec{x} & \vec{F}\vec{G} \vec{F}^\cT \\
    \end{bmatrix}.
\end{equation*}
By construction, \( \vec{F} \vec{x}  = [ \| \vec{x} \| , 0 , \ldots , 0 ]^\T \).
Moreover, since the real and imaginary parts of the entires of \( \vec{x} \) are iid standard normal random variables, 
\begin{equation*}
    \| \vec{x} \| \stackrel{\textup{dist.}}{=} \chi(2(N-1))
\end{equation*}
where \( \chi(j) \) is the Chi distribution with \( j \) degrees of freedom.
Next, note that \( \vec{y} \) and \( \vec{G} \) are independent of \( \vec{x} \) and therefore independent of \( \vec{F} \).
Then, by the unitary invariance of Gaussian vectors \( \vec{y}^\T \vec{F}^\cT \stackrel{\text{dist.}}{=} \vec{y}^\T \) and by the invariance of Gaussian matrices under unitary conjugation \( \vec{F} \vec{G} \vec{F}^\cT \stackrel{\text{dist.}}{=} \vec{G}\).

We can now apply this process to the submatrix \( \vec{F} \vec{G} \vec{F}^\cT \). 
Thus, inductively, we obtain (with probability one) a unitary matrix \( \vec{V} \) so that \( \vec{V} \vec{e}_1= \vec{e}_1 \) and
\begin{align}
    \label{eqn:hess}
    \vec{V}\vec{G}_N \vec{V}^\cT \stackrel{\text{dist.}}{=} 
\frac{1}{\sqrt{2N}}
\begin{bmatrix}
    \mathcal{N}_{\mathbb{C}}(0,2) & \cdots  & \cdots & \cdots & \mathcal{N}_{\mathbb{C}}(0,2) \\
    \chi(2(N-1)) & \ddots &  && \vdots \\
    & \chi(2(N-2)) & \ddots  && \vdots\\
    && \ddots & \ddots & \vdots \\
    &&&\chi(2(1)) & \mathcal{N}_{\mathbb{C}}(0,2)
\end{bmatrix}.
\end{align}
Here \( \mathcal{N}_{\mathbb{C}}(0,2) \) is a complex Gaussian with mean zero and variance 2.

Denote by \( \vec{H}_N \) the matrix in \cref{eqn:hess} and let \( \vec{A}_N := \vec{I} + \sigma \vec{H}_N \) for some \( \sigma \in (0,1) \).
Then the GMRES residual for the system \(  \vec{A}_N \vec{x} =\| \vec{b} \| \vec{e}_1 \) at step \( k \) has identical distribution to the GMRES residuals for the system \( (\vec{I} + \sigma \vec{G}_N) \vec{x}  = \vec{b} \) at step \( k \).

The residual norm \( \| \vec{r}^{(k)} \| \) for the system \( \vec{A} \vec{x} = \vec{b} \) can be written as 
\begin{align}
    \label{eqn:res_formula}
    \frac{\| \vec{r}^{(k)} \|^2}{\|\vec{b}\|^2}
    = 
    \frac{1}{1 + \| ([\widetilde{\vec{A}}]_{2:k+1,:k})^{-\cT} ([\widetilde{\vec{A}}]_{1,:})^\cT \|^2}
\end{align}
where \( \widetilde{\vec{A}} \) is the \( (k+1)\times k \) upper-Hessenberg matrix produced by the Arnoldi algorithm run for \( k \) iterations on \( \vec{A} \) and \( \vec{b} \) and $-*$ denotes the inverse conjugate transpose \cite[Theorem 5.1]{meurant_11}.
It's well known that the Arnoldi algorithm applied to a upper-Hessenberg matrix and the first unit vector will produce back the same upper-Hessenberg matrix. 
Thus, since \( \vec{A}_N \) is upper-Hessenberg, \( \widetilde{\vec{A}} = [\vec{A}_N]_{:k+1,:k} \).

With \( k \) remaining fixed, we will use \cref{eqn:res_formula} to analyze the GMRES residual norm in the \( N\to\infty \) limit. 
Direct computation shows that
\begin{equation*}
    [\vec{A}_N]_{:k+1,:k}
    \prto
    \vec{I} + \sigma
    \begin{bmatrix}
        0 & \cdots & 0 \\
        1 & \ddots &  \vdots \\
        & \ddots & 0 \\
        & & 1  
    \end{bmatrix}
    =
    \begin{bmatrix}
        1 &  \\
        \sigma & \ddots \\
        & \ddots & 1 \\
        & & \sigma  
    \end{bmatrix}
    =: \vec{A}_\infty.
\end{equation*}
Now note that \( [\vec{A}_\infty]_{1,:} = \vec{e}_1 \) so that, by basic properties of Jordan blocks,
\begin{equation*}
    ([\vec{A}_\infty]_{2:k+1,:k})^{-\cT} \vec{e}_1
    = \begin{bmatrix}
        \sigma &  \\
        1 & \ddots & \\
        & \ddots & \ddots &\\
        & &  1&\sigma  
    \end{bmatrix}^{-1} \vec{e}_1
    = 
    \begin{bmatrix}
        \sigma^{-1} \\
        -\sigma^{-2} \\
        \sigma^{-3} \\
        \vdots \\
        (-1)^{k-1}\sigma^{-k}
    \end{bmatrix}.
\end{equation*}
We therefore have
\begin{equation*}
     1 + \| ([\widetilde{\vec{A}}_\infty]_{2:k+1,:k})^{-\cT} ([\widetilde{\vec{A}}_\infty]_{1,:})^\T \|^2 
    = 1+\sum_{i=1}^{k} \sigma^{-2i}
    = \left( \frac{1-\sigma^{2+2k}}{1-\sigma^2} \right) \sigma^{-2k}.
\end{equation*}
Thus, using \cref{eqn:res_formula} and the continuity of the matrix inverse in the neighborhood of any invertible matrix, we obtain:
\GMRESlim*
We remark that it would be interesting to study the fluctuations in \( \| \vec{r}^{(k)} \| \), either by characterizing the asymptotic distribution or deriving quantitative bounds for the rate of convergence in probability.
This has been done for the related conjugate gradient and MINRES algorithms \cite{deift_trogdon_20,paquette_trogdon_20,ding_trogdon_21}.

\section{Resolvent norms and pseudospectra of Ginibre matrices}
\label{sec:pseudospectra}

The analysis in the previous section relied on the fact that \( \vec{b} \) is independent of \( \vec{G}_N \).
If \( \vec{b} \) is allowed to depend on \( \vec{G}_N \), then the estimate in \cref{thm:GMRES_lim_rate} need not hold.
Indeed, in \cref{fig:GMRES_example} we illustrated an example where the estimate is far from accurate.
As such, we turn to \cref{eqn:GMRES_Kss}.
We begin by studying the pseudospectra of Ginibre matrices by bounding resolvent norms.
For some nice pictures and a high level discussion of pseudospectra of random matrices, see \cite[Chapter 35]{trefethen_embree_05}.

For \( r,\tilde{r} \) with \( r \geq \tilde{r} \geq 1 \), recall that \( \mathcal{A}(\tilde{r},r) \) is the closed annulus with inner radius \( \tilde{r} \) and outer radius \( r \). 
Our first goal is to show the resolvent is bounded on \( \mathcal{A}(\tilde{r},r) \):
\begin{lemma}
\label{thm:resolvent_bound}
Let \( \vec{G}_N \) be an \( N\times N \) complex Ginibre matrix.
Then for any  \( r,\tilde{r} \) with \( r \geq \tilde{r} \geq 1 \), for all \( \epsilon > 0 \), 
\begin{equation*}
    \bOne \Big[ \forall z\in\mathcal{A}(\tilde{r},r): ~ \mathfrak{e}(r)^{-1/2} - \epsilon < \| R(z,\vec{G}_N) \| < \mathfrak{e}(\tilde{r})^{-1/2} + \epsilon \Big]
    \asto 1.
\end{equation*}   
\end{lemma}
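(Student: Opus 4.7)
The plan is to first establish pointwise almost-sure convergence $\|R(z,\vec{G}_N)\|\asto \mathfrak{e}(|z|)^{-1/2}$ at each fixed $z$ with $|z|>1$, and then promote this to uniform two-sided control on the annulus $\mathcal{A}(\tilde r,r)$ via a net argument based on the resolvent identity.

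For the pointwise limit I would invoke the classical random-matrix result that for each fixed $z_0$ with $|z_0|>1$ the smallest singular value of the shifted Ginibre matrix satisfies $\sigma_{\min}(z_0\vec{I}-\vec{G}_N)^2 \asto \mathfrak{e}(|z_0|)$, equivalently $\|R(z_0,\vec{G}_N)\|\asto \mathfrak{e}(|z_0|)^{-1/2}$. This is typically derived from an analysis of the empirical spectral distribution of the Hermitian matrix $(z_0\vec{I}-\vec{G}_N)^{\cT}(z_0\vec{I}-\vec{G}_N)$ via the Stieltjes transform method, or equivalently from free probability on a free circular element shifted by a scalar; the formula for $\mathfrak{e}$ in the statement of the theorem is precisely the resulting deterministic equivalent for the square of that bottom eigenvalue.

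To turn pointwise into uniform control, fix $\epsilon>0$ and choose a finite $\delta$-net $\{w_j\}_{j=1}^{M}\subset \mathcal{A}(\tilde r,r)$, with $\delta$ to be specified. A finite intersection of almost-sure events is almost sure, so for any preassigned $\eta>0$ the pointwise limit gives, eventually in $N$, $|\|R(w_j,\vec{G}_N)\|-\mathfrak{e}(|w_j|)^{-1/2}|<\eta$ for every $j$. The resolvent identity $R(z,\vec{A})-R(z',\vec{A})=(z'-z)R(z,\vec{A})R(z',\vec{A})$ yields, for any $z$ within distance $\delta$ of a net point $w_j$ with $\delta\|R(w_j,\vec{G}_N)\|<1$, the two-sided estimate
\begin{equation*}
    \frac{\|R(w_j,\vec{G}_N)\|}{1+\delta\|R(w_j,\vec{G}_N)\|} \leq \|R(z,\vec{G}_N)\| \leq \frac{\|R(w_j,\vec{G}_N)\|}{1-\delta\|R(w_j,\vec{G}_N)\|}.
\end{equation*}
Since $\mathfrak{e}$ is increasing and $\tilde r\leq |w_j|\leq r$, every net point satisfies $\mathfrak{e}(r)^{-1/2}\leq \mathfrak{e}(|w_j|)^{-1/2}\leq \mathfrak{e}(\tilde r)^{-1/2}$. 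Choosing $\delta$ small as a function of $\epsilon$ and $\mathfrak{e}(\tilde r)$ and then $\eta$ small makes the right-hand side less than $\mathfrak{e}(\tilde r)^{-1/2}+\epsilon$ and the left-hand side exceed $\mathfrak{e}(r)^{-1/2}-\epsilon$, simultaneously for every net point and hence, by the covering property, for every $z\in\mathcal{A}(\tilde r,r)$.

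The main obstacle is the pointwise convergence itself; once it is in hand the net and resolvent-identity manipulations are routine. The edge case $\tilde r=1$ requires care: there $\mathfrak{e}(\tilde r)^{-1/2}=\infty$ so the upper bound is vacuous, but the spectrum of $\vec{G}_N$ can sit arbitrarily close to the inner circle for finite $N$, so the argument should be run on compact subannuli $\mathcal{A}(1+\delta',r)$ and the $\tilde r=1$ statement recovered in the limit $\delta'\to 0$ using monotonicity of $\mathfrak e$.
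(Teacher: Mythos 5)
Your proposal follows the paper's strategy exactly: pointwise almost-sure convergence of $\|R(z,\vec{G}_N)\|$ to $\mathfrak{e}(|z|)^{-1/2}$ at fixed $z$ (outsourced to the random-matrix literature via the limiting spectral measure of $(z\vec{I}-\vec{G}_N)^\cT(z\vec{I}-\vec{G}_N)$ and the no-eigenvalues-outside-the-support theorem), followed by a finite net and a local continuity estimate for the resolvent norm to upgrade to uniform two-sided control on the compact annulus. The one genuine difference is in how you obtain the local continuity estimate: the paper works through the Hermitian matrix $\vec{Y}^z = (z\vec{I}-\vec{G}_N)^\cT(z\vec{I}-\vec{G}_N)$, Weyl's bound on $|\lambda_{\min}(\vec{Y}^z)-\lambda_{\min}(\vec{Y}^{\tilde z})|$, and Lipschitz control of $x\mapsto x^{-1/2}$, whereas you apply the first resolvent identity directly to get the multiplicative two-sided estimate $\|R(w_j)\|/(1+\delta\|R(w_j)\|) \leq \|R(z)\| \leq \|R(w_j)\|/(1-\delta\|R(w_j)\|)$. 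Your version is shorter and avoids the bookkeeping with the auxiliary constants $e,f,g,h,L$; it buys nothing asymptotically but is easier to audit.

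One small correction on the $\tilde r = 1$ edge case. You are right that the upper bound is vacuous there since $\mathfrak{e}(1)^{-1/2}=\infty$, but your proposed fix (subannuli $\mathcal{A}(1+\delta',r)$ with $\delta'\to 0$) is not actually needed, nor does it by itself cover the remaining thin ring. The lower bound from your resolvent-identity estimate already works uniformly on all of $\mathcal{A}(1,r)$: the map $t\mapsto t/(1+\delta t)$ is increasing, so $\|R(w_j)\|\geq \mathfrak{e}(r)^{-1/2}-\eta$ gives $\|R(z)\|\geq (\mathfrak{e}(r)^{-1/2}-\eta)/(1+\delta(\mathfrak{e}(r)^{-1/2}-\eta))$ regardless of how large $\|R(w_j)\|$ may be near $|z|=1$, and $\delta,\eta$ depend only on the finite quantity $\mathfrak{e}(r)^{-1/2}$. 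The condition $\delta\|R(w_j)\|<1$ is only needed for the upper-bound direction, which is precisely the vacuous one when $\tilde r=1$. So your construction goes through without the extra limiting step.
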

Towards this end, note that, for any \( z \),
\begin{equation*}
    \| R(z,\vec{G}_N) \|  = \lmin( \vec{Y}_N^z )^{-1/2}
    ,\qquad\text{where} \qquad
    \vec{Y}_N^z := ( z \vec{I} - \vec{G}_N)^\cT ( z \vec{I} - \vec{G}_N).
\end{equation*}
Studying the eigenvalues of \( \vec{Y}_N^z \) is a common approach for studying the resolvent norm since \( \vec{Y}_N^z \) is Hermitian and therefore potentially simpler to analyze. 

Let \( \rho_N^z(t) \) be the empirical spectral measure of \( \vec{Y}_N^z \); i.e. 
\begin{equation*}
    \rho_N^z(t) := \frac{1}{N} \sum_{i=1}^{N} \delta(t-\lambda_i(\vec{Y}_N^z))
\end{equation*}
where \( \delta(t) \) is the delta distribution centered at zero and \( \{ \lambda_i(\vec{Y}_N^z) \}_{i=1}^{N} \) are the eigenvalues of \( \vec{Y}_N^z \).
In the \( N\to\infty \) limit, \( \rho_N^z(t) \) converges in distribution to a deterministic limiting density \( \rho^z(t) \) almost surely \cite{dozier_silverstein_07}.
Specifically, \cite[Theorem 1.1]{dozier_silverstein_07} shows that the associated Stieltjes transform
\begin{equation*}
    m^z(w) := \int_{-\infty}^{\infty} \frac{\rho^z(t)}{w-t}\d t
\end{equation*}
satisfies a certain integral equation determined by properties of the information and noise matrices.
For shifted Ginibre matrices, the equation for the Stieltjes transform reduces to an algebraic relation
\begin{equation*}
    \frac{1}{m^z(w)} - \frac{1}{1+|z|^2 m^z(w)} + (1+|z|^2 m^z(w)) w = 0,
\end{equation*}
where it is required that \( \Im( m^z(w) )> 0 \) if \( \Im(w) > 0 \) \cite[Equation 9]{cipolloni_erdos_schroder_20}.
From this expression, the support of \( \rho^z(t) \) can be directly computed.
In particular, for \( z \) with \( |z|>1 \), as seen in \cite[Equation 18a]{cipolloni_erdos_schroder_20}, \( \rho^z(t) \) is supported on \( [\mathfrak{e}(z), \mathfrak{f}(z)] \) where 
\begin{align}
    \label{eqn:Y_support}
    \mathfrak{e}(z), \mathfrak{f}(z)  := \frac{8 d^2\pm (9-8 d)^{3/2} - 36 d +27}{8(1-d)}, \qquad d:= 1-|z|^2.
\end{align}
For \( |z|>1 \), the limiting density \( \rho^z(t) \) has square root behavior at the edges \( \mathfrak{e}(z) \), \( \mathfrak{f}(z) \) \cite[Equation 18b]{cipolloni_erdos_schroder_20}.
This means that \( \rho^z(t) \) is non-negative just to the right of \( \mathfrak{e}(z) \) so that, for all \( \epsilon > 0 \), \( \bOne[ \lmin(\vec{Y}_N^z ) <  \mathfrak{e}(z) + \epsilon ] \to 1 \) almost surely as \( N\to\infty \). 

It is known that almost surely no eigenvalues of information plus noise matrices lie outside the support of the limiting spectral density \cite{vallet_loubaton_mestre_12,bai_silverstein_12}.
In particular, for any \( \epsilon > 0 \),
\begin{equation*}
    \bOne \Big[  \lmin(\vec{Y}_N^z ) > \mathfrak{e}(z) -\epsilon \Big] 
    \asto 1.
\end{equation*}
Thus, combining the previous results and using that $\mathfrak{e}(z)$ is increasing as a function of $|z|$, for each \( z \in \mathcal{A}(r, \tilde r)\), for all \( \epsilon > 0 \), 
\begin{align}
    \label{eqn:resolvent_bound_single}
    \bOne \Big[ \mathfrak{e}(r)^{-1/2} - \epsilon < \| R(z,\vec{G}_N) \| < \mathfrak{e}(\tilde{r})^{-1/2} + \epsilon \Big] 
    \asto 1.
\end{align}
The relationship between \( \| R(z,\vec{G}_N) \| \) and \( |z| \) is explored numerically in \cref{fig:pseudospectra}.

\begin{figure}
    \centering
    \includegraphics[scale=.8]{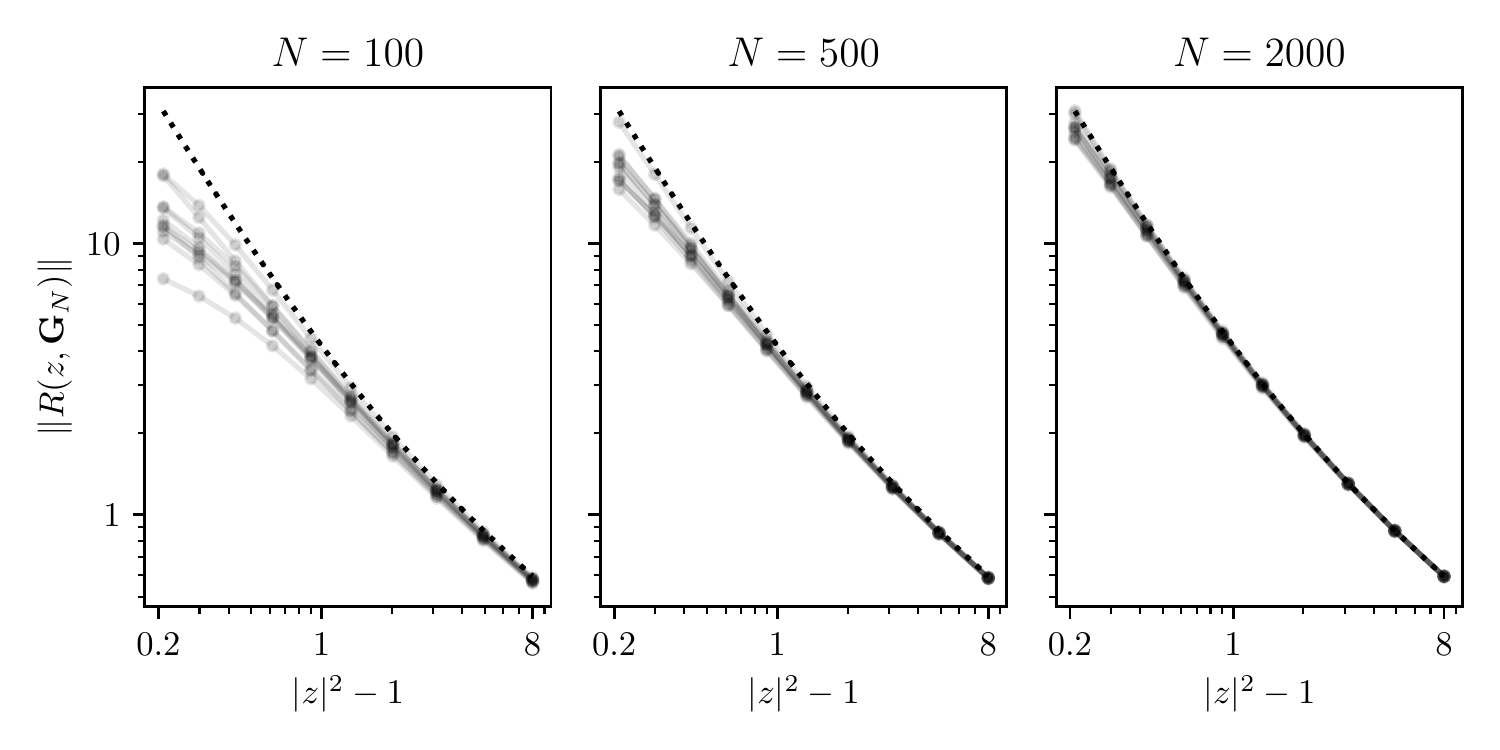}
    \caption{Relationship between \( \| R(z,\vec{G}_N) \| \) and \( |z|^2-1 \) for 10 samples of \( \vec{G}_N \). 
    The dotted curve is \( \mathfrak{e}(z)^{1/2} \). 
    Here we take \( z \) on the real axis, although the same result is expected for any \( z \) due to the rotational symmetry of \( \vec{G}_N \).
    Note that as \( N\to\infty \), the estimate \( \mathfrak{e}(z)^{1/2} \) becomes more accurate.}
    \label{fig:pseudospectra}
\end{figure}

We will now upgrade \cref{eqn:resolvent_bound_single} to simultaneously hold for all \( z \in \mathcal{A}(r, \tilde r)\); i.e. prove \cref{thm:resolvent_bound}.
Our basic approach is to construct a finite set \( \mathcal{N}_{\epsilon}(r,\tilde r) \subset \mathcal{A}(r,\tilde r) \) such that for any point \( \tilde{z}\in\mathcal{A}(r, \tilde{r}) \),  there is a point \( z \in \mathcal{N}_{\epsilon}(\tilde{r},r) \) for which \( \| R(z,\vec{G}_N) \| \) is bounded and for which \( |z-\tilde{z}| \) is small enough so that \( \| R(\tilde{z},\vec{G}_N) \| \) is close to \( \| R(z,\vec{G}_N) \| \).
Towards this end, set \( e = \mathfrak{e}(r)^{-1/2} + 0.1 \), \( f = r + 0.1 \), and \( g = 2.1 \).
Then, for each \( z\in \mathcal{A}(r, \tilde r)\), 
\begin{align}
    \label{eqn:lip_cond_single}
    \bOne\Big[ \| R(z,\vec{G}_N) \| < e,~ |z| < f,~ \| \vec{G}_N \| < g \Big]
    \asto 1.
\end{align}
Here we have used the results above and the well-known fact that \( \| \vec{G}_N \| \lesim 2 \) almost surely as $N\to\infty$ \cite{geman_80}.

Fix \( \epsilon > 0 \) and, with the benefit of hindsight, define \( h := \min\{1, (4e^2(f+g+1))^{-1} \} \) and \( L := 3 e^{3} (f+g+1)  \).
Let \( \mathcal{N}_{\epsilon}(r,\tilde r) \subset \mathcal{A}(\tilde{r},r) \) be a finite set of points  so that any other point in \( \mathcal{A}(r, \tilde r) \) is within \( \min\{ h/2, \epsilon/(2L) \}  \) of a point in \( \mathcal{N}_{\epsilon}(r,\tilde r) \).
This is possible since \( \mathcal{A}(r, \tilde r) \) is compact and \( \min\{h/2,\epsilon/(2L)\} > 0 \).
Then, since \( \mathcal{N}_{\epsilon}(r,\tilde r) \) contains a finite number of points, 
\cref{eqn:resolvent_bound_single} implies
\begin{align}
    \label{eqn:resolvent_bound_e3}
    \bOne \Big[ \forall z\in\mathcal{N}_{\epsilon}(r,\tilde r): ~ \mathfrak{e}(\tilde{r})^{-1/2} - \frac{\epsilon}{2} < \| R(z,\vec{G}_N) \| < \mathfrak{e}(r)^{-1/2} + \frac{\epsilon}{2} \Big]
    \asto 1
\end{align}
and \cref{eqn:lip_cond_single} implies
\begin{align}
    \label{eqn:resolvent_condition}
    \bOne\Big[ \forall z\in\mathcal{N}_{\epsilon}(r,\tilde r): \| R(z,\vec{G}_N) \| < e,~ |z| < f,~ \| \vec{G}_N \| < g \Big]
    \asto 1.
\end{align}
Let \( \tilde z\in \mathcal{A}(r,\tilde r) \).
By construction, there exists \( z\in\mathcal{N}_{\epsilon}(r,\tilde r) \) such that \( |z-\tilde z| \leq \min\{ h/2 , \epsilon/(2L) \} \).
Condition on the event: \( \{ \forall z\in\mathcal{N}_{\epsilon}(r,\tilde r):~\| R(z,\vec{G}_N) \| < e,~ |z| < f,~ \| \vec{G}_N \| < g \}  \). 
Then, reveling in our excellent choices of \( h \) and \( L \), we can use basic properties of the resolvent norm (see \cref{sec:resolvent_lip}) to show that
\begin{equation*}
    \big| \| R(z,\vec{G}_N) \| - \| R( \tilde z,\vec{G}_N) \| \big| \leq L |z-\tilde{z}| < \epsilon/2
\end{equation*}
which, combined with \cref{eqn:resolvent_bound_e3}, implies that
\begin{equation*}
    \mathfrak{e}(\tilde{r})^{-1/2} + \epsilon 
    <  \| R(\tilde z,\vec{G}_N) \| < 
    \mathfrak{e}(r )^{-1/2} + \epsilon. 
\end{equation*}
Applying \cref{eqn:as_cond} yields \cref{thm:resolvent_bound}.

\subsection{Pseudospectra}

\Cref{thm:resolvent_bound} gives a bound for the \( \epsilon \)-pseudospectrum of \( \vec{A} \).
Recall $\mathfrak{e}:[1,\infty)\to [0,\infty)$ is increasing with $\mathfrak{e}(1) = 0$, and relate $\epsilon>0$ with $r>1$ by $\epsilon = \mathfrak{e}(r)^{-1/2}$.
Then, for all $\epsilon>0$ (\( r > 1 \)) and for all \( \tilde{\epsilon} > 0 \), using \cref{thm:resolvent_bound} and the fact that \( \bOne[ \forall z \in \mathcal{D}(0,1) : \| R(z,\vec{G}_N) \| > T ] \asto 1 \) for any \( T>0 \), we have 
\begin{align}
    \label{eqn:resolvent_disk}
    \bOne \Big[ \forall z\in \mathcal{D}(0,r) :  \| R(z,\vec{G}_N) \| > \mathfrak{e}(r)^{-1/2} - \tilde{\epsilon} \Big]
    &\asto 1
    \\
    \bOne \Big[ \forall z\in \mathcal{A}(r,\infty) :  \| R(z,\vec{G}_N) \| < \mathfrak{e}(r)^{-1/2} + \tilde{\epsilon} \Big]
    & \asto 1.
\end{align}

We will use this to show that the \( \epsilon \)-pseudospectrum of \( \vec{G}_N \) is near to \( \mathcal{D}(0,r) \) when \( N \) is large. 
Fix $\tilde{\epsilon}>0$ and condition on the events: \( \{ \forall z\in \mathcal{D}(0,r): \| R(z,\vec{G}_N) \| > \epsilon  -\tilde{ \epsilon}  \} \) and \( \{ \forall z\in \mathcal{D}(r,\infty): \| R(z,\vec{G}_N) \| < \epsilon +\tilde{ \epsilon} \} \).
Then
\begin{equation*}
    \mathcal{D}(0,r -\tilde{ \epsilon}) \subset \Lambda_\epsilon(\vec{G}_N) \subset \mathcal{D}(0,r+\tilde{\epsilon}).
\end{equation*}
Thus, by \cref{thm:hausdorff_disks} we have
\begin{equation*}
    d_H(\Lambda_{\epsilon}(\vec{G}_N), \mathcal{D}(0,r)) < \tilde{\epsilon}.
\end{equation*}
Using \cref{eqn:as_cond} we therefore establish:
\pseudospectra*

Note that this result is for the fixed \( \epsilon \)-pseudospectrum (i.e. \( |z|>1 \) fixed).
From a random matrix theory perspective, it is more interesting to study other limits. 
Bounds on the eigenvalues of \( \vec{Y}_N^z \) in the case \( |z|<1 \) are used in proofs of the circular law, 
and the case \( |z| \leq 1 + CN^{1/2} \) is the main focus of \cite{cipolloni_erdos_schroder_20}.
Other work focuses on pseudospectra directly. 
For instance, \cite{bourgade_dubach_19} studies the volume of \( \Lambda_{\epsilon} \) in an \( \epsilon \to 0 \) limit where \( \epsilon \) depends on \( N \).

\section{The numerical range of Ginibre matrices and Crouzeix's conjecture}
\label{sec:crouzeix}

We now turn our attention to the numerical range.
For any matrix \(   \vec{A} \), the numerical range \( W(\vec{A}) \) is convex. 
It is not hard to show that \(  \operatorname{Re}(W(\vec{A})) =  W(\operatorname{He}(\vec{A})) \), where \( \operatorname{He}(\vec{A}) := (\vec{A} + \vec{A}^{\cT})/2 \) is the Hermitian part of $\vec{A}$.
The real part of the rightmost point of \( W(\operatorname{He}(\vec{A})) \) is simply the largest eigenvalue of \( \operatorname{He}(\vec{A}) \).
Thus, we can obtain a tangent line to the numerical range: $\lambda_{\textup{max}} ( \operatorname{He} (\vec{A})) + t \ii$, $t \in ( - \infty , \infty )$. 
Now, note that \( \exp(\ii\theta) W(\vec{A}) =  W(\exp(\ii\theta)\vec{A}) \); i.e. the numerical range is preserved under rotations of the complex plane. 
Thus, applying the above procedure to \( \exp(\ii\theta) \vec{A} \), rather than \( \vec{A} \), allows us to compute a set of tangent lines for the numerical range. 
Since the numerical range is convex, this procedure will construct a polygon enclosing the numerical range, which converges to the numerical range as more values of \( \theta \) are evaluated.
This is a standard technique for computing the boundary of the numerical range (see for instance \cite{johnson_78}) which we use to plot the numerical range for several instances of \( \vec{G}_N \) in \cref{fig:numerical_range}.

This technique can be applied to Ginibre matrices analytically \cite{collins_gawron_litvak_zyczkowski_14}.
Towards this end, note that
\begin{equation*}
    \operatorname{He}(\vec{G}_N) = \frac{\vec{G}_N + \vec{G}_N^{\cT}}{2}
\end{equation*}
is a scaled Gaussian Unitary Ensemble (GUE) matrix. 
The eigenvalues of GUE matrices are well studied, and when scaled so that the diagonal entries have variance \( 1/N \), the limiting distribution is a semicircle on \( [-2,2] \).
Indeed, this is the celebrated Wigner semicircle law \cite{wigner_58} which arguably pioneered what is now called random matrix theory.
The diagonal entries of \( \operatorname{He}(\vec{G}_N) \) have variance \( 1/(2N) \), so accounting for this difference, we therefore have that \cite{bai_yin_88}
\begin{align}
    \label{eqn:GUE_lim}
    \|\! \operatorname{He}(\vec{G}_N) \| \asto \sqrt{2}.
\end{align}

This fact is then applied to rotated matrices \( \exp(\ii\theta) \vec{G}_N \), \( \theta\in[0,2\pi) \).
Since \( \exp(\ii\theta) \vec{G}_N \stackrel{\text{dist.}}{=} \vec{G}_N \), the analog of \cref{eqn:GUE_lim} applies for all fixed \( \theta \). 
A covering argument \cite[Theorems 4.1]{collins_gawron_litvak_zyczkowski_14} similar to the one we used above allows the result to be transferred from fixed \( \theta \) to simultaneously hold for all \( \theta \in [0,2\pi) \). 
The result is that, as expected, the numerical range of \( \vec{G}_N \) converges to the disk of radius \( \sqrt{2} \) centered at the origin almost surely as \( N\to\infty \) \cite[Theorem 4.1]{collins_gawron_litvak_zyczkowski_14}.
Specifically, 
\begin{align}
    \label{eqn:nr_disk}
    d_H(W(\vec{G}_N), \mathcal{D}(0,\sqrt{2})) 
    \asto 0.
\end{align}

\begin{figure}
    \centering
    \includegraphics[scale=.8]{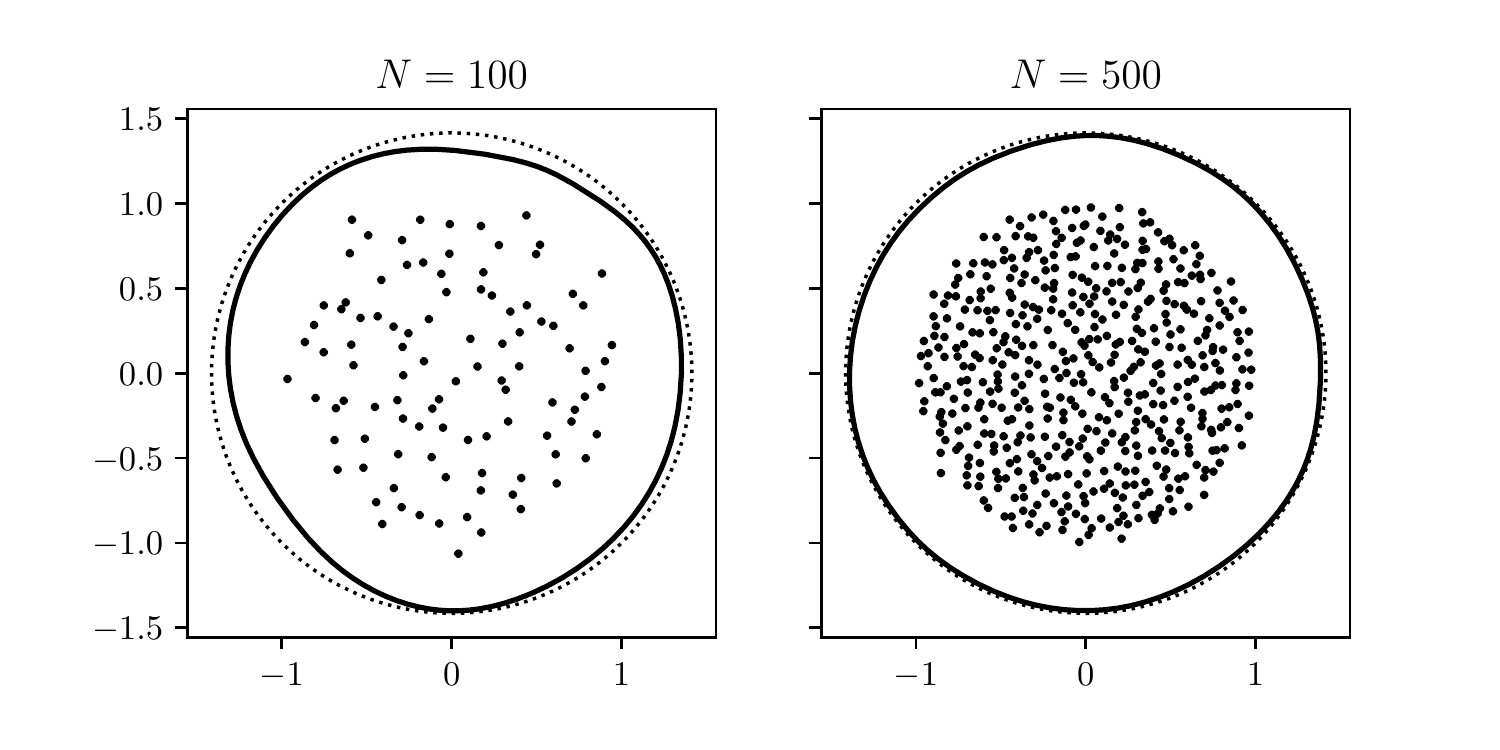}
    \caption{Numerical range (interior of solid curve) and eigenvalues (dots) of \( \vec{G}_N \). 
    The boundary of the disk \( \mathcal{D}(0,\sqrt{2}) \) is shown as the dotted circle. Note that as \( N\to\infty \), the numerical range approaches \( \mathcal{D}(0,\sqrt{2}) \).}
    \label{fig:numerical_range}
\end{figure}

An open question in matrix analysis is determining the minimum value $K$ so that   \( W(\vec{A}) \) is a $K$-spectral set for every matrix $\vec{A}$. 
Crouzeix's conjecture is that this minimum value is 2 \cite{crouzeix_04,bickel_gorkin_greenbaum_ransford_schwenninger_wegert_20}.
It is known that the numerical range is a \( (1+\sqrt{2}) \)-spectral set for any \( \vec{A} \) \cite{crouzeix_palencia_17}, and for practical purposes, this bound is hardly worse than the conjectured value of \( 2 \). 
Even so, determining classes of matrices for which the value \( (1+\sqrt{2}) \) can be improved remains an active area of research.
In this section, we add to these results by establishing a version of Crouzeix's conjecture for large Ginibre matrices.

If the numerical range of a matrix is circular, then the numerical range is a \( 2 \)-spectral set for the given matrix \cite{okubo_ando_75,caldwell_greenbaum_li_18}.
We will show a perturbative version of this statement which will hold for the numerical range of \( \vec{G}_N \); i.e. that nearly circular numerical range are nearly \( 2 \)-spectral sets.
Our exposition follows that of \cite{caldwell_greenbaum_li_18} which itself is based on \cite{crouzeix_palencia_17}.
It is an interesting question whether the numerical range of \( W(\vec{G}_N) \) is nearly a \( K \) spectral set for \( \vec{G}_N \) for any \( K < 2 \).
We have performed numerical experiments which suggest the numerical range of $\vec{G}_N$ may nearly be a \( \sqrt{2} \)-spectral set.
This is discussed further in \cref{sec:nr_blaschke}.

Let \( \Omega \) be a region with with a smooth boundary containing in its interior the spectrum of a matrix \( \vec{A} \). 
Then for any function \( f \) analytic on \( \Omega \) and continuous on the boundary,
\begin{equation*}
    f(\vec{A}) 
    = \frac{1}{2\pi \ii} \int_{\partial \Omega} f(\sigma) R(\sigma,\vec{A}) \d\sigma,
\end{equation*}
where \( R(\sigma,\vec{A}) := (\sigma \vec{I} - \vec{A})^{-1} \) is the resolvent.
Now define
\begin{equation*}
    g(\vec{A}) := \frac{1}{2\pi \ii} \int_{\partial \Omega} \overline{f(\sigma)} R(\sigma,\vec{A}) \d\sigma
\end{equation*}
and consider the matrix
\begin{equation*}
    f(\vec{A}) + g(\vec{A})^{\cT}  = \int_0^L \mu(s,\vec{A}) f(\sigma(s)) \d s
\end{equation*}
where
\begin{equation*}
    \mu(s,\vec{A}) :=  \left[ \frac{\sigma'(s)}{2\pi \ii} R(\sigma(s),\vec{A}) \right] + \left[ \frac{\sigma'(s)}{2\pi \ii} R(\sigma(s),\vec{A}) \right]^\cT.
\end{equation*}
Here we have parametrized \( \partial \Omega \) by \( s \in [0,L] \) with \( \sigma : [0,L] \to \partial \Omega \) giving the map from the parametrization to the boundary.

When \( \Omega = W(\vec{A}) \), it can be shown that \( \| f(\vec{A}) + g(\vec{A})^\cT \| \leq 2 \) from which the constant \( K = 1+\sqrt{2} \) is then obtained.  
One of the aims of \cite{caldwell_greenbaum_li_18} was to consider \( \Omega \) other than the numerical range, for instance a set just inside the numerical range. 
In this case, \( \mu(s,\vec{A}) \) may not be positive semidefinite so the authors introduce the necessarily positive semidefinite matrix
\begin{equation*}
    \mu(s,\vec{A}) - \lmin(\mu(s,\vec{A})) \vec{I}.
\end{equation*}
From this, they establish \cite[Lemma 2.1]{caldwell_greenbaum_li_18} that
\begin{align}
    \label{eqn:fggam_bd}
    \| f(\vec{A}) + g(\vec{A})^\cT + \gamma \vec{I} \| \leq 2 + \delta
\end{align}
where
\begin{equation*}
    \gamma := -\int_0^L \lmin(\mu(s,\vec{A})) f(\sigma(s)) \d s
    \qquad\text{and}\qquad
    \delta := -\int_0^L \lmin(\mu(s,\vec{A})) \d s.
\end{equation*}
Note that if \( \sigma \) is positively oriented, then \( \sigma'(s) / \ii \) is the outward normal vector to \( \partial \Omega \) at \( \sigma(s) \). 
Therefore, assuming \( \Omega \) is convex, if we view \( \mu \) as a scalar function, then the set
\begin{equation*}
    \{ z \in \mathbb{C} \setminus \{ \sigma(s) \} : \mu(s,z) > 0 \}
    = \{ z \in \mathbb{C} \setminus \{ \sigma(s) \} : \Re ((\sigma'(s)/\ii)/(\sigma(s)-z) > 0 \}
\end{equation*}
is the open half plane containing \( \Omega \) which is tangent to \( \partial \Omega \) at \( \sigma(s) \).
Next, note that
\begin{equation*}
    \lmin(\mu(s,\vec{A}))
    = \min_{\|\vec{v}\|=1} \vec{v}^\cT \mu(s,\vec{A}) \vec{v}
    = \min_{\|\vec{v}\|=1} \Re \Big[ \frac{\sigma'(s)}{\pi \ii} \vec{v}^\cT  R(\sigma(s), \vec{A}) \vec{v} \Big].
\end{equation*}
Thus, we see that the sign of \( \delta \) depends on \( \Omega \) in relation to the numerical range \cite{caldwell_greenbaum_li_18}.
Specifically, if \( \Omega = W(\vec{A}) \) then \( \delta = 0 \), 
if \( \Omega \subsetneq \operatorname{int}(W(\vec{A}))  \) then \( \delta > 0 \), 
and if \( \Omega \supsetneq \operatorname{cl}(W(\vec{A})) \) then \( \delta < 0 \).

\subsection{Nearly circular numerical range}

For any any \( N\times N \) matrix \( \vec{A} \) there exists a function \( f \) which attains the ratio \( C(W(\vec{A}),\vec{A}) \).
Without loss of generality, we will assume $\|f\|_{W(\vec{A})} = 1$.
It is known that this function is of the form \( f = B \circ \phi \) where \( B \) is a finite Blaschke product of degree at most \( N-1 \) and \( \phi \) is any conformal mapping from the field of values to the unit disk. 
Suppose \( B \) is the Blaschke product which maximizes \( \| (B\circ \phi)(\vec{A}) \| \) among all Blaschke products of degree at most \( N-1 \).
Then, assuming \( \| (B\circ\phi)(\vec{A}) \| > 1  \), \( \vec{u}_1^\cT \vec{v}_1 = 0 \), where \( \vec{u}_1 \) and \( \vec{v}_1 \) are the left and right singular vectors of \( (B\circ\phi)(\vec{A}) \) corresponding to the largest singular value.

Let \( \Omega \) be any disk \( \mathcal{D}(c,r) \) containing the numerical range.
For \(  z\in \Omega \), provided \( f \) is analytic in a neighborhood of \(   \Omega \), 
it's not hard to show that
\begin{equation*}
    g(z) 
    = \frac{1}{2\pi \ii}\int_{\partial \Omega} \overline{f(\sigma)}(\sigma-z)^{-1} \d\sigma
    = \overline{f(c)}.
\end{equation*}
Let \( f = B \circ \phi \) where \( \phi(z) = z/r \) and \( B \) maximizes \( \| (B\circ\phi)(\vec{A}) \| \).
Then if \( \| (B\circ\phi)(\vec{A}) \| > 1 \),
\begin{equation*}
    \vec{u}_1^\cT( f(\vec{A}) + g(\vec{A})^{*} + \gamma \vec{I} ) \vec{v}_1 = \vec{u}_1^\cT f(\vec{A}) \vec{v}_1 = \| f(\vec{A}) \|.
\end{equation*}
This, in conjunction with \cref{eqn:fggam_bd} and the fact $W(\vec{A})\subseteq \Omega$ shows that 
\begin{equation*}
    \| f(\vec{A}) \| \leq \max\{ 1, 2 + \delta \} \leq 2.
\end{equation*}

Now, let \( \tilde{\Omega} \) be any disk \( \mathcal{D}(c,\tilde{r}) \) contained in the numerical range.
Define $\sigma (s) := c + r \exp ( \ii s/r )$ and $\tilde{\sigma} ( \tilde{s} ) := c + \tilde{r} \exp ( \ii \tilde{s} / \tilde{r} )$ for $s \in [0, 2 \pi r ]$ and $\tilde{s} \in [0, 2 \pi \tilde{r} ]$.
We will take $\tilde{s} = (\tilde{r}/r)s$.
Then 
\begin{equation*}
    | \sigma(s) - \tilde{\sigma}(\tilde{s}) | = r - \tilde{r}
    \qquad \text{and} \qquad
    \sigma'(s) = \tilde{\sigma}'(\tilde{s}) = \ii \exp(\ii s/r).
\end{equation*}
Define \( \tilde\mu \) and \( \tilde\delta \) analagously to \( \mu \) and \( \delta \).
Then the above argument shows that, as long as the eigenvalues of \( \vec{A} \) are contained within \( \tilde\Omega \), then \( \tilde\Omega \) is a \( \max\{1,2+\tilde\delta \}\)-spectral set for \( \vec{A} \). 
Our aim is to bound \( |\tilde\delta| \) in terms of a quantity depending on \( r- \tilde{r} \).

Note that \(  \lmin(\mu(s,\vec{A})) \) is positive whereas \( \lmin(\tilde{\mu}(\tilde{s},\vec{A})) \) is negative.
Thus, using standard eigenvalue perturbation bounds we find
\begin{equation*}
    | \lmin(\tilde{\mu}(\tilde{s},\vec{A}))|
    \leq
    | \lmin(\mu(s,\vec{A})) - \lmin(\tilde{\mu}(\tilde{s},\vec{A})) | 
    \leq \| \mu(s,\vec{A}) - \tilde{\mu}(\tilde{s},\vec{A}) \|.
\end{equation*}
Now, by definition, 
\begin{equation*}
    \mu(s,\vec{A}) - \tilde{\mu}(\tilde{s},\vec{A}) 
    =
    2 \operatorname{He}\left[ \frac{\sigma'(s)}{2\pi \ii} R(\sigma(s),\vec{A}) - \frac{\tilde\sigma'(\tilde{s})}{2\pi \ii}R(\tilde{\sigma}(\tilde{s}),\vec{A} ) \right].
\end{equation*}
Thus, using the first resolvent identity, and suppresing the dependence of \( \sigma(s) \) and \( \tilde\sigma(\tilde{s}) \) on \( s \) and $\tilde{s}$ for notational clarity,
\begin{align*}
    \frac{\sigma'}{2\pi \ii} R(\sigma,\vec{A}) - \frac{\tilde\sigma'}{2\pi \ii}R(\tilde{\sigma},\vec{A} )
    &=
    \frac{\sigma'}{2\pi \ii} R(\sigma,\vec{A}) - \frac{\sigma'}{2\pi \ii}R(\tilde{\sigma},\vec{A} ) + \frac{\sigma'-\tilde\sigma'}{2\pi \ii} R(\tilde\sigma,\vec{A})
    \\&= - \frac{\sigma'}{2\pi \ii} (\sigma -\tilde\sigma) R(\sigma,\vec{A}) R(\tilde\sigma,\vec{A}) + 0.
\end{align*}
Thus, using the fact that $\|\! \operatorname{He}(\vec{B}) \| \leq \|\vec{B}\|$ and $|\sigma'(s)| = 1$,
\begin{equation*}
 | \lmin(\tilde{\mu}( \tilde{s},\vec{A}))|
    \leq \frac{r-\tilde{r}}{\pi} 
      \| R(\sigma,\vec{A}) \| \| R(\tilde\sigma,\vec{A}) \| ,
\end{equation*}
and therefore, provided that \( \| R(z,\vec{A}) \| \leq \Delta \) for all \( z \in\partial{\Omega} \cup \partial\tilde\Omega \),
\begin{align}
    \label{eqn:delta_int_bound}
    |\tilde\delta| \leq \int_0^{2\pi} | \lmin(\tilde{\mu}(s,\vec{A}))| \d s
    \leq 2 ( r - \tilde{r} ) \Delta^2.
\end{align}
Assuming that \( |r - \tilde{r}| \) is small relative to \( \Delta^2 \), this implies that  \( \tilde\Omega \), and therefore \( W(\vec{A}) \), is a \( (2+\tilde\delta ) \)-spectral set for some \( |\tilde\delta| \) small.

In the case of Ginibre matrices, for any \( \eta \in (0,\sqrt{2}-1.1) \), we can take \( r = \sqrt{2} + \eta \) and \( \tilde{r} = \sqrt{2} - \eta \). 
From \cref{eqn:nr_disk,thm:hausdorff_disks} we see that
that, for all \( \eta \in (0,\sqrt{2}) \),
\begin{equation*}
    \bOne \left[ \tilde\Omega \subset W(\vec{G}_N) \subset \Omega \right]
    \asto 1.
\end{equation*}
Moreover, all eigenvalues of \( \vec{G}_N \) are contained in the disk \( \mathcal{D}(0,1.1) \subset \tilde\Omega \) almost surely as \( N\to\infty \). 
Since \( |r-\tilde{r}| = 2\eta \), from \cref{thm:resolvent_bound}, as long as $\eta<\sqrt{2} - 1.1$, it suffices to take \( \Delta = \mathfrak{e}(1.1)^{-1/2} \).
Thus, for any \( \epsilon > 0 \), \( \tilde\Omega \) is a \( (2+\epsilon) \)-spectral set for \( \vec{G}_N \) almost surely as \(  N\to\infty \). 
The maximum modulus principle then implies that:
\crouzeix*

\section{Deferred proofs}

\subsection{Proof of main bound}

\GMRESidealbound*
\begin{proof}
    
    In both cases we will use the fact that if \( \Omega \) is a \( K \)-spectral set for \( \vec{G}_N \) then \( 1 + \sigma \Omega \) is a \( K \)-spectral set for \( \vec{I} + \sigma\vec{G}_N \).

    Fix \( \eta > 1 \).
 The eigenvalues of \( \vec{G}_N \) are contained in \( \mathcal{D}(0,(\eta+1)/2) \subset \mathcal{D}(0,\eta) \) almost surely as \( N\to\infty \) \cite{tao_11} (this is also implied by \cref{thm:pseudospectra} with $\epsilon$ sufficiently small).
    Conditioning on the eigenvalues of \( \vec{G}_N \) being contained in \( \mathcal{D}(0,\eta) \), 
    \begin{equation*}
        \| f(\vec{G}_N) \|
        = \left \| \frac{1}{2\pi \ii} \int_{\partial \mathcal{D}(0,\eta)} f(z) R(z,\vec{G}_N) \d{z} \right\|
        \leq \frac{2\pi \eta}{2\pi} \| f \|_{\mathcal{D}(0,\eta)} \sup_{z\in\partial\mathcal{D}(0,\eta)} \| R(z,\vec{G}_N) \|.
    \end{equation*}
    \Cref{thm:pseudospectra} implies that, for any \( \epsilon > 0 \), 
    \begin{equation*}
        \bOne \Big[ \sup_{z\in\partial\mathcal{D}(0,\eta)} \| R(z,\vec{G}_N) \| < \mathfrak{e}(\eta)^{-1/2} + \epsilon \Big]
        \asto 1.
    \end{equation*}
    Therefore 
    \begin{equation*}
        \bOne \Big[
            \mathcal{D}(0,\eta) \text{ is a \( \eta (\mathfrak{e}(\eta)^{-1/2} + \epsilon) \)-spectral set for } \vec{G}_N
        \Big] \asto 1.
    \end{equation*}
    Since this holds for any $\eta>1$, we can take $\eta$ as the value minimizing $\eta \mathfrak{e}(\eta)^{-1/2}(\sigma \eta)^k$.
    Applying \cref{eqn:GMRES_Kss,eqn:minimax} and proves the first part of the statement.

    Next, note that in the proof of \cref{thm:crouzeix}, we in fact establish that, for any \( \eta > 0 \) and \( \epsilon > 0 \), \( \mathcal{D}(0,\sqrt{2}-\eta) \) is a \( (2+\epsilon) \)-spectral set for \( \vec{G}_N \) almost surely as \( N\to\infty \).
    This implies that for any \( \epsilon > 0 \), \( \mathcal{D}(0,\sqrt{2}) \) is a \( (2+\epsilon) \)-spectral set for \( \vec{G}_N \) almost surely as \( N\to\infty \), see \eqref{eq:subset}.
    Applying \cref{eqn:GMRES_Kss,eqn:minimax} and proves the second part of the statement. 
\end{proof}

\subsection{Proof of resolvent norm bound}
\label{sec:resolvent_lip}
In proving \cref{thm:resolvent_bound} we have used the following:

\begin{lemma}
Fix \( z \in \mathbb{C} \) and suppose 
\( \| R(z,\vec{G}) \| < e \), 
\( |z|<f \), 
and \( \| \vec{G} \| < g \).
    Set \( h := \min\{1, (4e^2(f+g+1))^{-1} \} \).
Then for all \( \tilde{z} \) with \( |z- \tilde{z}| < h \)
\begin{equation*}
    | \| R(z, \vec{G}) \| - \| R(\tilde{z}, \vec{G}) \| | < L |z-\tilde z| .
\end{equation*}
where \( L := 3 e^{3} (f+g+1) \).

\end{lemma}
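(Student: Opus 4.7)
The plan is to combine the first resolvent identity with a Neumann series argument. The identity
\begin{equation*}
    R(z,\vec{G}) - R(\tilde z,\vec{G}) = (\tilde z - z)\, R(z,\vec{G})\, R(\tilde z,\vec{G})
\end{equation*}
follows directly from $R(z,\vec{G})^{-1} - R(\tilde z,\vec{G})^{-1} = (z-\tilde z)\vec{I}$. Taking norms and applying the reverse triangle inequality yields
\begin{equation*}
    \big|\, \|R(z,\vec{G})\| - \|R(\tilde z,\vec{G})\| \,\big|
    \leq |\tilde z - z|\, \|R(z,\vec{G})\|\, \|R(\tilde z,\vec{G})\|,
\end{equation*}
so the task reduces to producing a uniform bound on $\|R(\tilde z,\vec{G})\|$ under the hypotheses of the lemma.

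To control $\|R(\tilde z,\vec{G})\|$, I would first extract a cheap \emph{lower} bound on $e$ from the other two hypotheses: since
\begin{equation*}
    e > \|R(z,\vec{G})\| = \frac{1}{\sigma_{\min}(z\vec{I} - \vec{G})} \geq \frac{1}{|z| + \|\vec{G}\|} > \frac{1}{f+g},
\end{equation*}
we obtain $e(f+g+1) > (f+g+1)/(f+g) > 1$. Next, I would use the factorization
\begin{equation*}
    \tilde z\vec{I} - \vec{G} = (z\vec{I} - \vec{G})\bigl(\vec{I} + (\tilde z - z) R(z,\vec{G})\bigr).
\end{equation*}
The definition of $h$ combined with the lower bound on $e(f+g+1)$ gives
\begin{equation*}
    |\tilde z - z|\, \|R(z,\vec{G})\| < h e \leq \frac{1}{4e(f+g+1)} < \frac{1}{4},
\end{equation*}
so the Neumann series is applicable and produces
\begin{equation*}
    \|R(\tilde z,\vec{G})\| \leq \frac{\|R(z,\vec{G})\|}{1 - |\tilde z - z|\, \|R(z,\vec{G})\|} < \frac{e}{1-1/4} < 2e.
\end{equation*}

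Combining the resolvent identity bound with $\|R(z,\vec{G})\| < e$ and $\|R(\tilde z,\vec{G})\| < 2e$ yields
\begin{equation*}
    \big|\, \|R(z,\vec{G})\| - \|R(\tilde z,\vec{G})\| \,\big| \leq 2e^2 |\tilde z - z|,
\end{equation*}
and the stated constant $L = 3e^3(f+g+1)$ is larger than $2e^2$ because $e(f+g+1) > 1$ forces $3e(f+g+1) > 3 > 2$. The only genuinely delicate point is verifying the Neumann convergence condition; once the a priori inequality $e > 1/(f+g)$ is recognized as a trivial consequence of the other two hypotheses, everything else is a routine estimate, so I do not anticipate a serious obstacle.
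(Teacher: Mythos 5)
Your proof is correct, and it takes a genuinely different route from the paper's. The paper works through the Hermitian matrix $\vec{Y}^z = (z\vec{I}-\vec{G})^\cT(z\vec{I}-\vec{G})$: it bounds $\|\vec{Y}^z - \vec{Y}^{\tilde z}\| \leq 2(f+g+1)|z-\tilde z|$ directly from the algebra, invokes Weyl's perturbation bound for $\lmin$, shows $\lmin(\vec{Y}^{\tilde z}) > e^{-2}/2$ using the choice of $h$, and finally applies a mean-value bound for $x\mapsto x^{-1/2}$ to convert back to $\|R\|=\lmin^{-1/2}$, yielding a Lipschitz constant $2^{3/2}e^3(f+g+1)<L$. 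You instead stay at the resolvent level: the first resolvent identity plus the reverse triangle inequality reduce the problem to controlling $\|R(\tilde z,\vec{G})\|$, which you do via the Neumann series applied to $\vec{I}+(\tilde z-z)R(z,\vec{G})$. The one non-obvious ingredient you need — and correctly supply — is the \emph{a priori} inequality $e(f+g+1)>1$ extracted from the hypotheses via $e>\|R(z,\vec{G})\|=1/\sigma_{\min}(z\vec{I}-\vec{G})\geq 1/(|z|+\|\vec{G}\|)>1/(f+g)$; this is what makes the definition of $h$ force the Neumann condition $|\tilde z-z|\,\|R(z,\vec{G})\|<1/4$. Your route is cleaner (no detour through singular values or a derivative bound for $x^{-1/2}$), and it actually delivers the sharper Lipschitz constant $2e^2 \leq L$. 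The paper's route has the virtue of being self-contained once the $\vec{Y}^z$ machinery is already set up for the spectral-density analysis in \cref{sec:pseudospectra}, but your argument is shorter and more transparent.
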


\begin{proof}
Define 
\( \vec{Y}^z := (z -\vec{G})^\cT (z - \vec{G}) \) and \( \vec{Y}^{\tilde z} := (\tilde{z} -\vec{G})^\cT (\tilde{z} - \vec{G}) \).
By a standard eigenvalue bound we have that
\begin{equation*}
    | \lmin( \vec{Y}^z ) - \lmin(\vec{Y}^{\tilde z} ) |
    \leq \| \vec{Y}^z - \vec{Y}^{\tilde{z}} \|.
\end{equation*}
Since \( |z|<f \) and \( |z-\tilde z|< 1 \), then  \( |\tilde{z}|<|z| + |z-\tilde{z}|<f+1 \) so bounding the derivative of $x\mapsto x^2$ gives that
\begin{equation*}
    \big| |z|^2 - |\tilde{z}|^2 \big|
    \leq 2 \max\{|z|,|\tilde z| \} | z-\tilde z|
    \leq 2 (f+1) | z-\tilde z|.
\end{equation*}
    Thus, using the definitions of \( \vec{Y}^z \) and \( \vec{Y}^{\tilde z} \), and the assumption \( \| \vec{G} \| < g \),
\begin{align*}
    \| \vec{Y}^z - \vec{Y}^{\tilde{z}} \| 
    &= \| |z|^2\vec{I} - |\tilde{z}|^2\vec{I} - (z-\tilde{z}) \vec{G}^\cT - (\overline{z-\tilde{z}}) \vec{G} \| 
    \\&\leq \big| |z|^2 - |\tilde{z}|^2 \big| + 2 |z-\tilde{z}| \| \vec{G} \|
    \\&\leq 2(f+g+1) |z-\tilde{z}| .
\end{align*}
Next, note that \( \| \vec{R}(z, \vec{G)} \| < e \) implies \( \lmin(\vec{Y}^z) > e^{-2} \) so using that \( |z-\tilde z| < h \leq (4e^2(f+g+1))^{-1} \), 
\begin{equation*}
    \lmin(\vec{Y}^{\tilde z}) > e^{-2} - 2(f+g+1) h > e^{-2}/2.
\end{equation*}
Next, note that bounding the derivative of \( x\mapsto x^{-1/2} \) gives that for any \( x, y > 0 \),
\begin{equation*}
    | x^{-1/2} - y^{-1/2} | \leq  \frac{1}{2}\min\{x,y\}^{-3/2} |x-y|.
\end{equation*}
Thus, 
\begin{equation*}
    | \lmin(\vec{Y}^{z})^{-1/2} - \lmin(\vec{Y}^{\tilde z})^{-1/2} |
    < \frac{1}{2}(e^{-2}/2)^{-3/2} (2(f+g+1))|z-\tilde{z}|
\end{equation*}
    The result follows from the definition of \( L \) and the fact that \( 2^{3/2} < 3 \).
\end{proof}

\subsection{Other proofs}

\begin{proof}[Proof of \cref{thm:hausdorff_disks}]
    Suppose \( \mathcal{D}(0,r-\epsilon) \subseteq S \subseteq \mathcal{D}(0,r+\epsilon)  \).
    Let \( x \in \mathcal{D}(0,r) \). 
    Then \( x \) is within \( \epsilon \) of a point in \( \mathcal{D}(0,r-\epsilon) \) and therefore of \( S \).
    Now, let \( x\in S \). 
    Then \( |x| \leq r+\epsilon \) so \( x \) is within \( \epsilon \) of a point in \( \mathcal{D}(0,r) \).

    Certainly \( S \subseteq \mathcal{D}(0,r+\epsilon) \).
    Suppose now that \( S \) is convex and that \( d_H(S,\mathcal{D}(0,r)) < \epsilon \).
    Let \( x \in \mathcal{D}(0,r-\epsilon) \). 
    Let \( y \) be the point on the boundary of \( S \) with same argument as \( x \) and, for the sake of contradiction, that \( |y|<|x| \).
    Let \( z \) be the point on the boundary of \( \mathcal{D}(0,r) \) such that the segment between \( y \) and \( z \) is perpendicular to some line which passes through \( y \) but no other points in \( S \). 
    Now, note that this line separates \( z \) from \( S \) and that all points in \( S \) are therefore a distance at least \( |z-y| > \epsilon \) from \( z \), contradicting the assumption \( d_H(S,\mathcal{D}(0,r)) < \epsilon \).
    Thus, we find that \( |y| \geq |x| \) which implies that \( \mathcal{D}(0,r-\epsilon) \subset S \).
\end{proof}

\begin{proof}[Proof of \cref{eqn:as_cond}]
Suppose $A_N\subset B_N$ and $\bOne[A_N]$ converges almost surely to 1 as $N\to\infty$.
Then, for all \( N \) and for all \( \omega \in \Omega \), \( \omega \in A_N \Longrightarrow \omega \in B_N  \) so
\begin{equation*}
    \bOne[A_N](\omega) = 1 \quad\Longrightarrow\quad \bOne[B_N](\omega)  = 1,
    \qquad 
    \forall N,\omega.
\end{equation*}
Therefore, since $\bOne[C](\omega)$ is either zero or one for any event $C$,
\begin{equation*}
    \PP \left[ \lim_{N\to\infty} \bOne[B_N] = 1 \right] 
    \geq \PP \left[ \lim_{N\to\infty} \bOne[A_N] = 1 \right] 
    = 1,
\end{equation*}
so \( \bOne[B_N] \asto 1 \).
\end{proof}

\appendix

\section{\nopunct Is the field of values of a large Ginibre matrix a \( \sqrt{2} \)-spectral set?}
\label{sec:nr_blaschke}
As we mentioned above, the function \( f \) which which attains the ratio \( C(W(\vec{A}),\vec{A}) \) is of the form \( f = B \circ \phi \) where \( B \) is a finite Blaschke product \( B(z) = \exp(\ii\theta) \prod_{i=1}^{N-1} ({z-\alpha_i})/({1-\overline{\alpha_i} z}) \) of degree at most \( N-1 \) and \( \phi \) is any conformal mapping from the field of values to the unit disk.
Here we assume \( | \alpha_i| \leq 1  \) to allow products with degree less than \( N-1 \).
Numerical codes to search for extremal Blaschke products have been used in previous studies of Crouzeix's conjecture \cite{bickelgorkin_greenbaum_ransford_schwenninger_wegert_20}.

\begin{figure}
    \centering
    \includegraphics[scale=0.8]{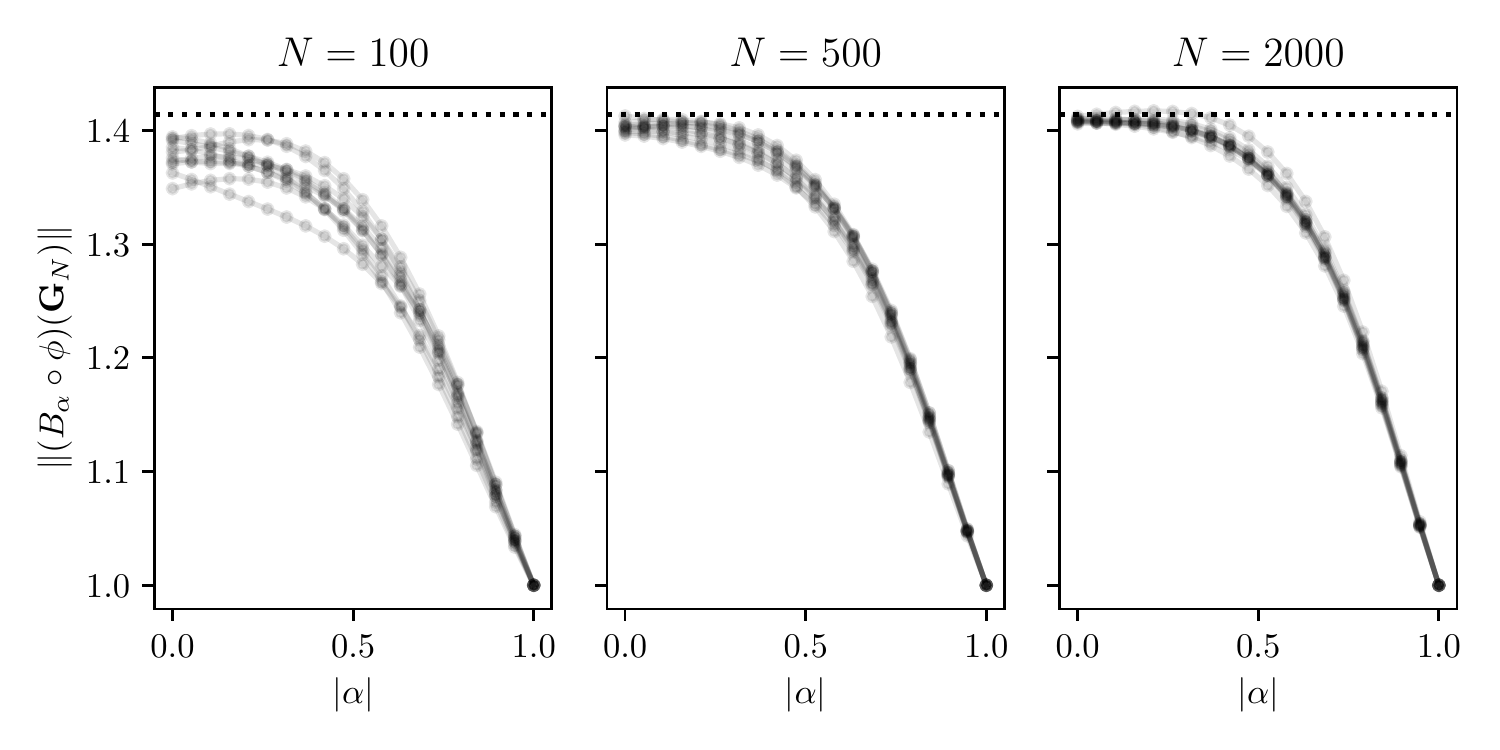}
    \caption{Relationship between \( \| (B_{\alpha}\circ \phi)(\vec{G}_N) \| \) and \( \alpha \) for 10 samples of \( \vec{G}_N \) when \( \phi(z) = z/\sqrt{2} \).
    The dotted line is at \( \sqrt{2} \).
    Here we take \( \alpha \) on the real axis, although the same result is expected for any \( |\alpha|<1 \) due the rotational symmetry of \( \vec{G}_N \).
    }
    \label{fig:BP_norm}
\end{figure}

For large Ginibre matrices, \( W(\vec{G}_N) \approx \mathcal{D}(0,\sqrt{2}) \). Therefore, we expect that \( C(W(\vec{G}_N),\vec{G}_N) \approx  C(\mathcal{D}(0,\sqrt{2}),\vec{G}_N)  \).
To approximate \( C(\mathcal{D}(0,\sqrt{2}),\vec{G}_N) \) we first approximately scale the problem to the unit disk using \( \phi(z) = z/\sqrt{2} \) and then apply a black box numerical optimzer to try and find the roots \( \{\alpha_i\} \) of the Blaschke product maximizing \( \| (B\circ\phi)(\vec{G}_N) \| \).
We observe that the \( B \) we compute are \emph{numerically} close to degree one Blaschke products.
That is, if we allow for higher degree products, we find that all but one of the \( \{ \alpha_i \} \) have magnitude extremely close to 1 and contribute very little to the norm of the resulting product.

Thus, in the \( N\to\infty \) limit, we might expect that \( f = B_{\alpha} \circ \phi \) where \( B_{\alpha}(z) = (z-\alpha)/(1-\overline{\alpha} z) \) for some value of \( \alpha \) and \( \phi(z) = z/\sqrt{2} \).
In \cref{fig:BP_norm}, we explore the relationship between \( \| (B_{\alpha}\circ \phi) (\vec{G}_N) \| \) and \( \alpha \) when \( \phi(z) = z/\sqrt{2} \).
We observe that this quantity seems to concentrate about some curve which is bounded above by \( \sqrt{2} \) and maximized at \( \alpha = 0 \).
In the \( \alpha = 0 \) case we have that \( (B_{0} \circ \phi ) (\vec{G}_N) = \vec{G}_N / \sqrt{2} \) so that \( \| (B_{0}\circ \phi)(\vec{G}_N) \| \asto \sqrt{2} \).

\bibliography{random_GMRES_Crouzeix}

\begin{thebibliography}{10}

\bibitem{bai_silverstein_12}
{\sc Z.~Bai and J.~W. Silverstein}, {\em No eigenvalues outside the support of
  the limiting spectral distribution of information-plus-noise type matrices},
  Random Matrices: Theory and Applications, 01 (2012), p.~1150004.

\bibitem{bai_yin_88}
{\sc Z.~D. Bai and Y.~Q. Yin}, {\em Convergence to the semicircle law}, The
  Annals of Probability, 16 (1988).

\bibitem{bickel_gorkin_greenbaum_ransford_schwenninger_wegert_20}
{\sc K.~Bickel, P.~Gorkin, A.~Greenbaum, T.~Ransford, F.~L. Schwenninger, and
  E.~Wegert}, {\em Crouzeix's conjecture and related problems}, Computational
  Methods and Function Theory, 20 (2020), pp.~701--728.

\bibitem{bickelgorkin_greenbaum_ransford_schwenninger_wegert_20}
\leavevmode\vrule height 2pt depth -1.6pt width 23pt, {\em Crouzeix's
  conjecture and related problems}, Computational Methods and Function Theory,
  20 (2020), pp.~701--728.

\bibitem{bourgade_dubach_19}
{\sc P.~Bourgade and G.~Dubach}, {\em The distribution of overlaps between
  eigenvectors of {G}inibre matrices}, Probability Theory and Related Fields,
  177 (2019), pp.~397--464.

\bibitem{caldwell_greenbaum_li_18}
{\sc T.~Caldwell, A.~Greenbaum, and K.~Li}, {\em Some extensions of the
  {C}rouzeix--{P}alencia result}, {SIAM} Journal on Matrix Analysis and
  Applications, 39 (2018), pp.~769--780.

\bibitem{carson_liesen_strakos_22}
{\sc E.~Carson, J.~Liesen, and Z.~Strakoš}, {\em 70 years of {K}rylov subspace
  methods: The journey continues}, 2022.

\bibitem{chalker_mehlig_98}
{\sc J.~T. Chalker and B.~Mehlig}, {\em Eigenvector statistics in
  non-{H}ermitian random matrix ensembles}, Physical Review Letters, 81 (1998),
  pp.~3367--3370.

\bibitem{cipolloni_erdos_schroder_20}
{\sc G.~Cipolloni, L.~Erd{\H{o}}s, and D.~Schr\"{o}der}, {\em Optimal lower
  bound on the least singular value of the shifted {G}inibre ensemble},
  Probability and Mathematical Physics, 1 (2020), pp.~101--146.

\bibitem{collins_gawron_litvak_zyczkowski_14}
{\sc B.~Collins, P.~Gawron, A.~E. Litvak, and K.~{\.{Z}}yczkowski}, {\em
  Numerical range for random matrices}, Journal of Mathematical Analysis and
  Applications, 418 (2014), pp.~516--533.

\bibitem{crouzeix_04}
{\sc M.~Crouzeix}, {\em Bounds for analytical functions of matrices}, Integral
  Equations and Operator Theory, 48 (2004), pp.~461--477.

\bibitem{crouzeix_07}
\leavevmode\vrule height 2pt depth -1.6pt width 23pt, {\em Numerical range and
  functional calculus in hilbert space}, Journal of Functional Analysis, 244
  (2007), pp.~668--690.

\bibitem{crouzeix_palencia_17}
{\sc M.~Crouzeix and C.~Palencia}, {\em The numerical range is a
  $(1+\sqrt{2})$-spectral set}, {SIAM} Journal on Matrix Analysis and
  Applications, 38 (2017), pp.~649--655.

\bibitem{deift_trogdon_20}
{\sc P.~Deift and T.~Trogdon}, {\em The conjugate gradient algorithm on
  well-conditioned {W}ishart matrices is almost deterministic}, Quarterly of
  Applied Mathematics, 79 (2020), pp.~125--161.

\bibitem{deift_menon_olver_trogdon_14}
{\sc P.~A. Deift, G.~Menon, S.~Olver, and T.~Trogdon}, {\em Universality in
  numerical computations with random data}, Proceedings of the National Academy
  of Sciences, 111 (2014), pp.~14973--14978.

\bibitem{ding_trogdon_21}
{\sc X.~Ding and T.~Trogdon}, {\em The conjugate gradient algorithm on a
  general class of spiked covariance matrices}, 2021.

\bibitem{dozier_silverstein_07}
{\sc R.~B. Dozier and J.~W. Silverstein}, {\em On the empirical distribution of
  eigenvalues of large dimensional information-plus-noise-type matrices},
  Journal of Multivariate Analysis, 98 (2007), pp.~678--694.

\bibitem{dumitriu_edelman_02}
{\sc I.~Dumitriu and A.~Edelman}, {\em Matrix models for beta ensembles},
  Journal of Mathematical Physics, 43 (2002), pp.~5830--5847.

\bibitem{edelman_rao_05}
{\sc A.~Edelman and N.~R. Rao}, {\em Random matrix theory}, Acta Numerica, 14
  (2005), pp.~233--297.

\bibitem{faber_liesen_tichy_13}
{\sc V.~Faber, J.~Liesen, and P.~Tich{\'{y}}}, {\em Properties of worst-case
  {GMRES}}, {SIAM} Journal on Matrix Analysis and Applications, 34 (2013),
  pp.~1500--1519.

\bibitem{geman_80}
{\sc S.~Geman}, {\em A limit theorem for the norm of random matrices}, The
  Annals of Probability, 8 (1980).

\bibitem{greenbaum_ptak_strakos_96}
{\sc A.~Greenbaum, V.~Pt{\'{a}}k, and Z.~Strako{\v{s}}}, {\em Any nonincreasing
  convergence curve is possible for {GMRES}}, {SIAM} Journal on Matrix Analysis
  and Applications, 17 (1996), pp.~465--469.

\bibitem{greenbaum_trefethen_94}
{\sc A.~Greenbaum and L.~N. Trefethen}, {\em {GMRES}/{CR} and
  {A}rnoldi/{L}anczos as matrix approximation problems}, {SIAM} Journal on
  Scientific Computing, 15 (1994), pp.~359--368.

\bibitem{johnson_78}
{\sc C.~R. Johnson}, {\em Numerical determination of the field of values of a
  general complex matrix}, {SIAM} Journal on Numerical Analysis, 15 (1978),
  pp.~595--602.

\bibitem{meurant_11}
{\sc G.~Meurant}, {\em On the residual norm in {FOM} and {GMRES}}, {SIAM}
  Journal on Matrix Analysis and Applications, 32 (2011), pp.~394--411.

\bibitem{okubo_ando_75}
{\sc K.~Okubo and T.~Ando}, {\em Constants related to operators of class
  $c_\rho$}, Manuscripta Mathematica, 16 (1975), pp.~385--394.

\bibitem{paquette_lee_pedregosa_paquette_21}
{\sc C.~Paquette, K.~Lee, F.~Pedregosa, and E.~Paquette}, {\em Sgd in the
  large: Average-case analysis, asymptotics, and stepsize criticality}, in
  Proceedings of Thirty Fourth Conference on Learning Theory, M.~Belkin and
  S.~Kpotufe, eds., vol.~134 of Proceedings of Machine Learning Research, PMLR,
  15--19 Aug 2021, pp.~3548--3626.

\bibitem{paquette_paquette_21}
{\sc C.~Paquette and E.~Paquette}, {\em Dynamics of stochastic momentum methods
  on large-scale, quadratic models}, 2021.

\bibitem{paquette_merrienboer_paquette_pedregosa_22}
{\sc C.~Paquette, B.~van Merriënboer, E.~Paquette, and F.~Pedregosa}, {\em
  Halting time is predictable for large models: A universality property and
  average-case analysis}, Foundations of Computational Mathematics,  (2022).

\bibitem{paquette_trogdon_20}
{\sc E.~Paquette and T.~Trogdon}, {\em Universality for the conjugate gradient
  and minres algorithms on sample covariance matrices}, 2020.

\bibitem{pfrang_deift_menon_13}
{\sc C.~W. Pfrang, P.~Deift, and G.~Menon}, {\em How long does it take to
  compute the eigenvalues of a random symmetric matrix}, Random Matrix Theory,
  Interacting Particle Systems, and Integrable Systems, Math. Sci. Res. Inst.
  Publ, 65 (2013), pp.~411--442.

\bibitem{silverstein_85}
{\sc J.~W. Silverstein}, {\em The smallest eigenvalue of a large dimensional
  {W}ishart matrix}, The Annals of Probability, 13 (1985).

\bibitem{tao_11}
{\sc T.~Tao}, {\em Outliers in the spectrum of iid matrices with bounded rank
  perturbations}, Probability Theory and Related Fields, 155 (2011),
  pp.~231--263.

\bibitem{trefethen_embree_05}
{\sc L.~N. Trefethen and M.~Embree}, {\em Spectra and pseudospectra: the
  behavior of nonnormal matrices and operators}, Princeton University Press,
  2005.

\bibitem{trotter_84}
{\sc H.~F. Trotter}, {\em Eigenvalue distributions of large {H}ermitian
  matrices; {W}igner's semi-circle law and a theorem of {K}ac, {M}urdock, and
  {S}zeg\"{o}}, Advances in Mathematics, 54 (1984), pp.~67--82.

\bibitem{vallet_loubaton_mestre_12}
{\sc P.~Vallet, P.~Loubaton, and X.~Mestre}, {\em Improved subspace estimation
  for multivariate observations of high dimension: The deterministic signals
  case}, {IEEE} Transactions on Information Theory, 58 (2012), pp.~1043--1068.

\bibitem{wigner_58}
{\sc E.~P. Wigner}, {\em On the distribution of the roots of certain symmetric
  matrices}, The Annals of Mathematics, 67 (1958), p.~325.

\bibitem{zhang_trogdon_22}
{\sc Y.~Zhang and T.~Trogdon}, {\em A probabilistic analysis of the {N}eumann
  series iteration}, Minnesota Journal of Undergraduate Mathematics, 7 (2022).

\end{thebibliography}
\bibliographystyle{siam}

\end{document}